\newtheorem{theorem}{Theorem}
\newtheorem{acknowledgement*}{Acknowledgement}
\newtheorem{conjecture}[theorem]{Conjecture}
\newtheorem{corollary}[theorem]{Corollary}
\newtheorem{lemma}[theorem]{Lemma}
\newtheorem{remark}[theorem]{Remark}
\newcommand{\R}{\mathbb{R}}
\newcommand{\tr}{\operatorname{tr}}
\newcommand{\disc}{\mathrm{disc}}
\newcommand{\ess}{\mathrm{ess}}
\newcommand{\ra}{\rightarrow}
\newcommand{\haus}{\mathscr{H}}
\newcommand{\eps}{\varepsilon}
\newcommand{\PP}{\mathcal{P}}
\newcommand{\FF}{\mathscr{F}}
\newcommand{\disp}{\displaystyle}
\newcommand{\II}{\mathrm{II}}
\newcommand{\di}{\mathrm{d}}
\newcommand{\metric}{\langle \, , \, \rangle}
\newcommand{\USC}{\mathrm{USC}}
\newcommand{\lip}{\mathrm{Lip}}
\begin{document}

\author{Greg\'orio P. Bessa \and Luqu\'esio P.M. Jorge \and Luciano Mari}
\title{\textbf{On the principal eigenvalue of the truncated Laplacian, and submanifolds with bounded mean curvature}}
\date{\today}
\maketitle

\scriptsize \begin{center} Departamento de Matem\'atica, Universidade Federal do Cear\'a,\\
Campus do Pici, 60455-760 Fortaleza (Brazil)\\
E-mail: bessa@mat.ufc.br, ljorge@mat.ufc.br
\end{center}

\scriptsize \begin{center} Dipartimento di Matematica, Universit\`a degli Studi di Torino,\\
Via Carlo Alberto 10, I-10123 Torino (Italy)\\
E-mail: luciano.mari@unito.it
\end{center}

\normalsize

\begin{abstract}
In this paper, we study the principal eigenvalue $\mu(\FF_k^-,E)$ of the fully nonlinear operator
	\[
	\FF_k^-[u] = \PP_k^-(\nabla^2 u) - h |\nabla u| 
	\]
on a set $E \Subset \R^n$, where $h \in [0,\infty)$ and $\PP_k^-(\nabla^2 u)$ is the sum of the smallest $k$ eigenvalues of the Hessian $\nabla^2 u$. We prove a lower estimate for $\mu(\FF_k^-,E)$ in terms of a  generalized Hausdorff measure $\haus_\Psi(E)$, for suitable $\Psi$ depending on $k$, moving some steps towards the conjecturally sharp estimate 
	\[
	\mu(\FF_k^-,E) \ge C \haus^k(E)^{-2/k}. 
	\]
The theorem is used to study the spectrum of bounded submanifolds in $\R^n$, improving on our previous work in the direction of a question posed by S.T. Yau. In particular, the result applies to solutions of Plateau's problem for CMC surfaces. 
\end{abstract}

\begin{center}
To Renato Tribuzy, on the occasion of his 75th birthday, with great admiration.
\end{center}

\tableofcontents

\section{Introduction}

It is a great pleasure for us to dedicate our work to Renato Tribuzy on the occasion of his 75th birthday, in recognition for his outstanding work to shape the field of Differential Geometry in Brazil, especially in the Amazon region.\par 
This note is about the spectral properties of some fully nonlinear, degenerate operators of geometric interest in $\R^n$. For $w \in C^2(\R^n)$, we let 
	\[
	\lambda_1(\nabla^2 w) \le \lambda_2(\nabla^2 w) \le \ldots \le \lambda_n(\nabla^2 w)
	\]
be the eigenvalues of the Hessian $\nabla^2 w$, in increasing order, and given $k \in \{1, \ldots, n\}$ we define
$$
\begin{array}{lcl}
\PP^-_k(\nabla^2 w) & \doteq & \lambda_1(\nabla^2 w) + \ldots + \lambda_k(\nabla^2 w), \\[0.2cm]
\PP^+_k(\nabla^2 w) & \doteq & \disp \lambda_{n-k+1}(\nabla^2 w) + \ldots + \lambda_n(\nabla^2 w). 
\end{array}
$$
We do not consider the case $k=n$, for which $\PP_n^+=\PP_n^- = \Delta$, and hereafter restrict to $k \in \{1,\ldots, n-1\}$ unless otherwise specified. Given $h \in \R^+_0 \doteq [0, \infty)$, we then consider the operators 
	\begin{equation}\label{def_Fkpm}
	\FF_k^+[w] \doteq \PP_k^+(\nabla^2 w) + h |\nabla w|, \qquad \FF_k^-[w] \doteq \PP_k^-(\nabla^2 w) - h|\nabla w|.
	\end{equation}
Both $\PP_k^\pm$ and $\FF_k^\pm$ naturally arise in Differential Geometry, especially in the theory of submanifolds. For instance, they appeared in the level set formulation of the mean curvature flow with higher codimension \cite{ambrosiosoner}, and to formulate partially positive Ricci curvature conditions suited to obtain Morse-theoretic results \cite{sha,wu}; they have been used in connection with barrier principles for submanifolds with higher codimension in \cite{joto} and later in \cite{white,white_2,galimame}; in the (almost) complex or calibrated setting, they are ubiquitous in the study of plurisubharmonic functions and in  potential theory \cite{HL_levi,HL_almostcom}. However, despite the many applications, only in recent years the analytic properties of $\PP_k^\pm$ and $\FF_k^\pm$ have systematically been investigated. In this respect, we quote \cite{HL_dir,HL_plurisub} by R. Harvey and B. Lawson, \cite{obsi} by A.M. Oberman and L. Silvestre, \cite{caffalini} by L. Caffarelli, Y.Y. Li and L. Nirenberg, and \cite{bigais_1,bigais_existence} by I. Birindelli, G. Galise and H. Ishii. Following \cite{bigais_1}, we name $\PP_k^\pm$ \emph{truncated Laplacians}.
%

Denote with $\USC(A)$ the set of upper-semicontinuous functions on a set $A \subset \R^n$. Following \cite{beniva}, there exist, at least, two slightly different notions of principal eigenvalue of $\FF_k^\pm$ on an open set $\Omega \subset \R^n$:
$$
\begin{array}{l}
\disp \mu(\FF_k^\pm, \Omega) \doteq \sup \Big\{ c \in \R : \exists w \in \USC(\Omega), \ w < 0 \ \ \text{on $\Omega$, } \, \FF^\pm_k[w] + c w \ge 0 \ \text{on } \, \Omega \Big\}, \\[0.4cm]
\disp \bar{\mu}(\FF_k^\pm, \Omega) \doteq \sup \Big\{ c \in \R :  \exists w \in \USC(\overline{\Omega}), \ w < 0 \ \ \text{on $\overline{\Omega}$, } \, \FF^\pm_k[w] + c w \ge 0 \ \text{on } \, \Omega \Big\}.
\end{array}
$$
Hereafter in this paper, inequality $\FF^\pm_k [w] + c w \ge 0$ is meant to hold in the viscosity sense. Note that $0 \le \bar{\mu}(\FF_k^\pm,\Omega) \le \mu(\FF_k^\pm,\Omega)$, since negative constants are admissible as $w$.

\begin{remark}\label{rem_posinega}
\emph{Customarily, principal eigenvalues are also defined in terms of positive supersolutions of $\FF^\pm_k [w] + c w = 0$. However, in view of the identity $\FF_k^-[-w] = -\FF_k^+[w]$, this doesn't introduce further constants of interest, since for instance $\mu(\FF_k^\pm,\Omega)$ can equivalently be defined as 
\[
\sup \Big\{ c \in \R : \exists w \in \mathrm{LSC}(\Omega), \ w > 0 \ \ \text{on $\Omega$, } \, \FF^\mp_k[w] + c w \le 0 \ \text{on } \, \Omega \Big\}.
\]
}
\end{remark}

For $E \subset \R^n$, define
$$
\mu(\FF_k^\pm,E) \doteq \sup \Big\{ \mu(\FF_k^\pm,\Omega) \ : \ \text{$\Omega \subset \R^n$ open, $E\subset \Omega$}\Big\}.
$$
and $\bar \mu(\FF_k^\pm,E)$ accordingly. The purpose of the present paper is to investigate possible lower bounds for $\bar \mu(\FF_k^\pm,E)$ depending on the size of $E$, in the spirit of the Faber-Krahn inequality
	\[
	\mu(\Delta, \Omega) \ge \left[\frac{\mu(\Delta,B)}{|B|^{-2/n}}\right] |\Omega|^{-\frac{2}{n}}
	\]
where $\Omega \Subset \R^n$ has smooth boundary, and $B$ is a ball with $|B|= |\Omega|$. For second order, uniformly elliptic operators in trace form
	\[
	Lw = a_{ij} \partial^2_{ij} w + b_i \partial_i w
	\]
with bounded, measurable coefficients $a_{ij}=a_{ji}$ and $b_i$ on $\R^n$, works of H. Berestycki, L. Nirenberg and S. Varadhan in \cite[Thm. 2.5]{beniva} and X. Cabr\'e in \cite{cabre} established the estimate 
	\begin{equation}\label{eq_lowerL}
	\mu(L,\Omega) \ge C |\Omega|^{-\frac{2}{n}} \qquad \forall \, \Omega \Subset \R^n \ \text{ open,} 
	\end{equation}
for some constant $C>0$ only depending on the ellipticity constants of $a_{ij}$, on $\|b_i\|_{L^n(\Omega)}$ and on an upper bound for $|\Omega|^{1/n}$. The case of $L$ in divergence form (with bounded, measurable coefficients) was shown before by H. Brezis and P.-L. Lions in \cite{brezislions}. For fully nonlinear operators which are $1$-homogeneous and uniformly elliptic, we are not aware of estimates like  \eqref{eq_lowerL}. However, a weaker result with a lower bound depending on $|\Omega|^{-1/n}$ can be found in \cite[Prop. 4.8]{qs}.

\vspace{0.2cm}

Inequalities of the type in \eqref{eq_lowerL} for $\FF_k^\pm$ seem quite difficult to achieve. Among the issues to overcome, we stress that the proofs of \eqref{eq_lowerL} are based on the ABP method and that, to our knowledge, sharp ABP inequalities tailored to the degenerate elliptic operators $\FF_k^\pm$ are yet to be formulated; their lack also helps to explain the absence of regularity results for $\PP_k^\pm$ when $k \not\in \{1,n\}$ (for $k=1$, see \cite{obsi,bigais_1}). A series of unusual phenomena for $\FF_k^\pm$ was first pointed out by I. Birindelli, G. Galise and H. Ishii in \cite{bigais_1}, and the results therein reveal the prominent role played by a boundedness condition for $h$ related to the diameter of $\Omega$. Let $k \in \{1, \ldots, n-1\}$, and let $R$ such that $\Omega \subset B_R$. In \cite[Cor. 4.2 and Prop. 4.3]{bigais_1}, the authors proved that 
	\[
	hR < k \qquad \Longrightarrow \qquad \left\{ \begin{array}{ll}
	\bar \mu( \FF_k^-, \Omega) \ge \frac{2(k-hR)}{R^2} \\[0,4cm]
	\bar \mu(\FF_k^+,\Omega) = +\infty.
	\end{array}\right.
	\]
Hence, searching for lower bounds for the principal eigenvalue of $\FF_k^+$ is meaningless, at least if $hR < k$, and hereafter we will focus on $\FF_k^-$. Condition $hR < k$ is sharp to guarantee a positive lower bound for $\bar \mu( \FF_k^-, \Omega)$. Indeed, as proved in \cite[Ex, 4.9]{bigais_1}, for each $\eps > 0$ small enough the annulus 
	\[
	\Omega_\eps = B_{3\pi/2 + \eps}\setminus \overline{B}_{3\pi/2 - \eps} \subset \R^n 
	\]
satisfies $\bar \mu(\FF_k^-,\Omega_\eps) = 0$ with the choice $h = k/(3\pi/2)$. Note that conditions $\Omega_\eps \subset B_R$ and $hR \le k$ barely fail to be simultaneously satisfied. Also, the example shows that the $n$-dimensional measure of $E$ is not expected to control $\bar \mu(\FF_k^-,E)$. 

A surprising fact is the validity of \emph{reversed} Faber-Krahn inequalities for the operator $\PP_1^-$. As conjectured in \cite{bigais_FK} building on results for multidimensional rectangles, and proved in \cite{parosa}, $\mu(\PP_{1}^-,\Omega)$ is \emph{maximized} by the ball among domains with the same fixed diameter  (please mind the conventions in \cite{parosa} and recall Remark \ref{rem_posinega}). Consequently, it is also maximized by the ball among domains with the same fixed volume.\par
\vspace{0.2cm}

We are ready to state our main result. To this aim, we recall that given a continuous, non-decreasing $\Psi : [0,c) \to \R^+_0$ with $\Psi(0)=0$, the generalized Hausdorff measure of order $\Psi(t)$ is defined by
			\[
				\haus_\Psi(E) \doteq \lim_{\delta \to 0^+} \inf \left\{ \sum_j \Psi(r_j) \ : \ E \subset \bigcup_{j=1}^\infty B_{r_j}(x_j), \ r_j \le \delta \right\}.
			\]
If $\Psi(t) = t^k$ then $\haus_\Psi$ is, up to an inessential constant, the standard Hausdorff $k$-dimensional measure $\haus^k$. We underline the inequality
	\[
	\haus^2(E) \le C \haus_\Psi(E) \qquad \text{where } \, \Psi(t) = t^2| \log(R/t)|, \ R \in \R^+,
	\]	
for some constant $C= C(R,c)$. 			

\begin{theorem}\label{teo_maingoal_k2}
Let $E \subset \R^n$ be a compact set of diameter $\mathrm{diam}(E) < R$, Fix $k \in \{1,\ldots, n-1\}$ and let $h \in \R^+_0$ satisfying $hR < k$. Then, there exists a constant $C= C(n,k,hR)$ such that 
\begin{equation}\label{eq_thegoal}
\bar\mu(\FF_k^-,E) \ge \frac{C}{\haus_\Psi(E)}, \qquad \text{where } \, \Psi(t) = \left\{ \begin{array}{ll}
	Rt & \quad \text{if } \, k=1 \\[0.2cm]
	t^2 |\log(R/t)| & \quad \text{if } \, k=2 \\[0.2cm]
	t^2 & \quad \text{if } \, k \ge 3.
	\end{array}\right.
\end{equation}
In particular, $\bar \mu(\FF_k^-,E) = +\infty$ whenever $\haus_\Psi(E) = 0$. 
\end{theorem}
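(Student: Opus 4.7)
The plan is to produce, for each $\eps>0$ and each sufficiently fine cover $\{B_{r_j}(x_j)\}_j$ of $E$ with $r_j\le\delta$ and $\sum_j\Psi(r_j)\le \haus_\Psi(E)+\eps$, a strictly positive function $v$ on the open set $\Omega:=\bigcup_j B_{r_j}(x_j)\supset E$ satisfying $\FF_k^+[v]+c v\le 0$ on $\Omega$ in the viscosity sense, with $c=C(n,k,hR)\,(\haus_\Psi(E)+\eps)^{-1}$. By Remark \ref{rem_posinega}, this gives the corresponding lower bound on $\bar\mu(\FF_k^-,\Omega)$; sending $\eps,\delta\to 0$ then yields \eqref{eq_thegoal}. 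The algebraic key is that $\FF_k^+$ is \emph{subadditive} on $C^2$ functions, via the dual description $\PP_k^+(M)=\sup\{\tr(M|_V): V\subset\R^n,\,\dim V=k\}$ and the triangle inequality $|\nabla(u+w)|\le|\nabla u|+|\nabla w|$; this subadditivity extends to the viscosity setting through the standard inf-convolution / doubling-of-variables machinery.

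I would set $v:=\sum_j\phi_j$, each $\phi_j$ a radially symmetric, strictly positive bump attached to $B_{r_j}(x_j)$. A natural template is $\phi_j(x)=r_j^2\,g_j(|x-x_j|)$, with $g_j\in C^1([0,R''])$ (for some $R''$ slightly above $\diam(E)+\delta$) defined piecewise: on $[0,r_j]$ as the parabola $A_j-s^2/(2r_j^2)$ (so that $g_j'(r_j^-)=-1/r_j$), and on $[r_j,R'']$ as the solution of the radial ODE
\[
g_j'' + g_j'\!\left[\frac{k-1}{s}-h\right] = 0, \qquad g_j(R'')=0,\quad g_j'(r_j^+)=-\tfrac{1}{r_j},
\]
obtained by forcing $\FF_k^+$ to vanish on radial functions under the generic sign pattern $g_j''>0>g_j'/s$. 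Explicit integration yields $g_j(r_j)\sim R/r_j$ for $k=1$, $\sim\log(R/r_j)$ for $k=2$, and $\sim 1/(k-2)$ for $k\ge 3$; consequently $\phi_j\le C_1(n,k,hR)\,\Psi(r_j)$, which is exactly the per-ball cost that the gauge $\Psi$ in \eqref{eq_thegoal} records. The three regimes reflect the non-integrability at infinity of the exterior radial profile when $k\le 2$ versus its integrability when $k\ge 3$.

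Next I would verify the pointwise inequalities. By construction $\FF_k^+[\phi_j]=0$ on the annulus $\{r_j<|x-x_j|<R''\}$; on $B_{r_j}(x_j)$ the parabola gives $\nabla^2\phi_j=-I$, hence $\PP_k^+(\nabla^2\phi_j)=-k$ and $|\nabla\phi_j|\le r_j$, producing $\FF_k^+[\phi_j]\le -k+hr_j \le -(k-hR)<0$. At the $C^1$ matching sphere $|x-x_j|=r_j$, a one-sided Hessian analysis shows that any $C^2$ test function touching $\phi_j$ from below inherits the strictly more negative \emph{interior} radial second derivative, while its gradient must match the common value $-r_j\hat\nu$, so the viscosity supersolution inequality $\FF_k^+[\phi_j]\le -(k-hR)$ persists there as well. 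Each $g_j$ being positive on $[0,R'')$, $v$ is strictly positive on $\overline\Omega$. Subadditivity then yields $\FF_k^+[v]\le\sum_j\FF_k^+[\phi_j]\le -(k-hR)$ on $\Omega$ (each $x\in\Omega$ lies in some $B_{r_j}(x_j)$, contributing the negative term, and all other contributions are $\le 0$), while $v\le C_1\sum_j\Psi(r_j)\le C_1(\haus_\Psi(E)+\eps)$; the choice $c:=(k-hR)/[C_1(\haus_\Psi(E)+\eps)]$ closes the argument.

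The principal obstacles I foresee are: (i) promoting the subadditivity of $\FF_k^+$ to the viscosity setting for sums whose summands are only $C^1$ across the matching spheres, which requires the inf-convolution / doubling-of-variables machinery; (ii) handling the regime $s\ge(k-1)/h$ of the exterior ODE (relevant when $hR$ approaches $k$), where the sign of $g_j''$ changes and the ordering of eigenvalues in $\PP_k^+$ must be revised --- this is circumvented by continuing $g_j$ past that threshold as a constant, where $\FF_k^+[g_j]\equiv 0$ trivially on radial functions; and (iii) the one-sided viscosity verification at the matching sphere via a Taylor expansion that properly distinguishes the radial and tangential directions. The interplay of these ingredients encodes the three distinct shapes of $\Psi$ in \eqref{eq_thegoal}: the logarithm for $k=2$ and the factor $R$ for $k=1$ are exactly the blow-up at infinity of the corresponding fundamental profile.
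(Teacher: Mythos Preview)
Your strategy --- cover $E$ by small balls, attach to each a radial barrier with sup-norm controlled by $\Psi(r_j)$, then sum using the sub/superadditivity of $\FF_k^\pm$ --- is exactly the paper's. The paper works directly with negative $\FF_k^-$-subsolutions rather than with positive $\FF_k^+$-supersolutions via Remark~\ref{rem_posinega}, but that difference is cosmetic. What genuinely differs is the radial building block. Instead of your piecewise profile (parabola on $[0,r_j]$, exterior ODE solution on $[r_j,R'']$, constant beyond $(k-1)/h$), the paper takes
\[
\psi(t)=\int_0^t \frac{e^{h^*s}}{s^{k-1}}\left[\int_0^s e^{-h^*\sigma}\sigma^{k-1}\,\xi(\sigma)\,\di\sigma\right]\di s,
\qquad \xi(t)=kS(t/a),
\]
with $S$ a fixed smooth cutoff and $h^*=\max\{h,\,h/(k-hR)\}$ (Lemma~\ref{basicconstr_Pk}). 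This $\psi$ is $C^2$ on the whole ball, and a single ODE argument (Lemma~\ref{lem_pk discend}) yields $\FF_k^-[\psi(r)]\ge \xi(r)/(1+h^*R)$ pointwise, handling both eigenvalue orderings $\psi''\lessgtr\psi'/r$ in one stroke. The $C^2$ regularity then makes the summation step a one-line pointwise inequality, so your obstacles (i) and (iii) simply do not arise; and the uniform treatment of the two orderings removes (ii) as well.

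One concrete point to flag in your sketch: the ``continue $g_j$ as a constant past $(k-1)/h$'' fix does not produce a $C^1$ match --- the left derivative $g_j'\big((k-1)/h^-\big)$ is strictly negative while the right derivative is zero --- so you get a genuine corner, and a test function touching from below there can have radial derivative anywhere in $[g_j'(s_0^-),0]$ with essentially unconstrained second derivative. The viscosity verification at that corner therefore needs more than you indicate. The paper's smooth construction shows this complication is avoidable rather than essential, and buys a much shorter proof at the cost of a slightly less transparent radial profile.
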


\begin{remark}
\emph{We stress that inequality \eqref{eq_thegoal} is scale-invariant for each $k$, due to the presence of $R$ in the definition of $\Psi(t)$. 
}
\end{remark}

\begin{remark}\label{rem_constantC}
\emph{The constant $C$ can be bounded from below uniformly in terms of $k,n$ and a of a lower bound for $k - hR$.
}
\end{remark}

It is reasonable to guess that the lower bound for $\bar \mu(\FF_k^-,E)$ in terms of the Hausdorff $k$-measure, that we proved for $k=1$, be obtainable also for $k > 1$. If so, also the case $k=2$ of our Theorem would  be nearly sharp, failing only by a logarithmic term. We propose the following   

\begin{conjecture}\label{conj_mainana}
Suppose that $E \subset \R^n$ is a compact subset of diameter $< R$, fix $k \in \{1,\ldots, n-1\}$ and $h \in \R^+$ satisfying $hR < k$.  Then, there exists a constant $C>0$ depending on $n,k, hR$ and on an upper bound for $\haus^k(E)$ such that
	\[
	\mu ( \FF_k^-, E) \ge C \haus^k(E)^{-\frac{2}{k}}.
	\]
In particular, if $\haus^k(E) = 0$ then $\mu(\FF_k^-,E) = +\infty$.
\end{conjecture}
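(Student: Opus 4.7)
The plan is to exploit the Ky Fan identity
\[
\PP_k^-(A) = \min\{\tr_V(A) : V \subset \R^n \text{ a $k$-dim subspace}\},
\]
which in viscosity terms identifies a supersolution $w \in \USC(\Omega)$, $w<0$, of $\FF_k^-[w]+cw \ge 0$ with a function whose restriction $w|_{x+V}$ is, for every $k$-plane $V$, a viscosity supersolution of $\Delta_V u - h|\nabla_V u| + cu \ge 0$ on the slice $\Omega \cap (x+V)$. The conjectural scaling $\haus^k(E)^{-2/k}$ matches the classical Faber--Krahn scaling $\mu(\Delta,U) \ge c_k |U|^{-2/k}$ in $\R^k$, so the natural strategy is to build $w$ out of $k$-dim Laplace eigenfunctions transported to $\R^n$.

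First I would treat a \emph{model case} in which $E$ admits a single good $k$-plane $V_0$: the projection $\pi_{V_0}(E) \subset V_0$ has $k$-Lebesgue measure at most $M := \haus^k(E)$ (automatic since $\pi_{V_0}$ is $1$-Lipschitz), and the $V_0^\perp$-thickness $\delta := \sup_{x\in E}|\pi_{V_0^\perp}(x)|$ is not too large. Let $\tilde w$ be the principal Dirichlet Laplace eigenfunction on a slightly enlarged open neighborhood $U^+ \supset \pi_{V_0}(E)$ in $V_0 \cong \R^k$, with eigenvalue $\mu_1^+ \ge c_k M^{-2/k}$ by Faber--Krahn, and define the lifted barrier
\[
w(x) = -\tilde w(\pi_{V_0}(x)) + \tfrac{1}{2} A\,|\pi_{V_0^\perp}(x)|^2
\]
on a tube $\Omega = \pi_{V_0}^{-1}(U)\cap\{|\pi_{V_0^\perp}|<\delta'\}$, with $U \Subset U^+$, $\delta'>\delta$, and $A$ chosen in the feasibility window $[C\mu_1^+,\, 2(\inf_U \tilde w)/(\delta')^2]$ (whose non-emptiness forces the upper bound on $\haus^k(E)$ appearing in the statement). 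Because $\nabla^2 w$ block-decomposes on $V_0\oplus V_0^\perp$ as $(-\nabla^2\tilde w)\oplus A\,I_{V_0^\perp}$ and $A$ dominates the eigenvalues of $-\nabla^2\tilde w$, the smallest $k$ eigenvalues of $\nabla^2 w$ come from the $V_0$-block, yielding $\PP_k^-(\nabla^2 w) = \tr_{V_0}(\nabla^2 w) = -\Delta_{V_0}\tilde w = \mu_1^+ \tilde w$. A short calculation then gives $\FF_k^-[w] + \mu_1^+ w = \tfrac{1}{2} A\mu_1^+|\pi_{V_0^\perp}|^2 - h|\nabla w|$, which can be made nonnegative on $\Omega$ by replacing $\mu_1^+$ with $\mu_1^+-\varepsilon$ ($\varepsilon>0$ small) and exploiting that $|\nabla\tilde w|/\tilde w$ is bounded on $U\Subset U^+$ together with the hypothesis $hR<k$; this gives $\mu(\FF_k^-,E) \ge c_k M^{-2/k}$ in the model case.

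The main obstacle is the passage from this model case to arbitrary compact $E$, which admits no single good projection. The natural idea is to cover $E$ by small pieces $E_j$ with $\sum_j \haus^k(E_j) \le 2 \haus^k(E)$, align each $E_j$ with its own plane $V_j$ via the model construction, and paste the local barriers $w_j$ into a global supersolution. The pointwise minimum $w = \min_j w_j$ preserves the viscosity supersolution property but yields only an eigenvalue of order $\min_j \haus^k(E_j)^{-2/k}$, missing the global scaling; any smoother pasting (inf-convolution, partition of unity) disrupts the block-diagonal Hessian structure on which the $\PP_k^-$ computation relied. In analogy with the arguments of Berestycki--Nirenberg--Varadhan and Cabr\'e for \eqref{eq_lowerL} in the uniformly elliptic setting, the correct substitute appears to be a sharp $k$-dimensional ABP inequality for $\PP_k^-$ controlling $\sup_E|w|$ by the $L^k$-norm of $(\PP_k^-[w])_-$ over $k$-dim slices of $\Omega$. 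As explicitly noted in the introduction, no such estimate is currently known, and this is in my view the genuine obstruction to closing the conjecture for $k\ge 2$.
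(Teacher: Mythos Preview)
The statement you are addressing is labelled \emph{Conjecture} in the paper, and the paper does \emph{not} provide a proof of it; it is explicitly left open, with Theorem~\ref{teo_maingoal_k2} established as partial progress (sharp for $k=1$, nearly sharp for $k=2$, and giving only an $\haus^2$-type bound for $k\ge 3$). There is therefore no ``paper's own proof'' to compare against. Your final paragraph recognises this correctly: you identify the missing sharp $k$-dimensional ABP inequality for $\PP_k^-$ as the genuine obstruction, which is exactly what the paper says in the introduction when it remarks that ``sharp ABP inequalities tailored to the degenerate elliptic operators $\FF_k^\pm$ are yet to be formulated''. In that sense your proposal is an honest and well-aimed discussion of \emph{why} the conjecture is open, not a proof of it.

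That said, your model-case construction already has a nontrivial gap that you gloss over. The identity $\PP_k^-(\nabla^2 w)=\tr_{V_0}(\nabla^2 w)=-\Delta_{V_0}\tilde w$ requires \emph{every} eigenvalue of $-\nabla^2\tilde w$ to lie below $A$ at each point of $U$. You assert that ``$A$ dominates the eigenvalues of $-\nabla^2\tilde w$'' with $A$ of order $\mu_1^+$, but a pointwise bound $\|\nabla^2\tilde w\|_{L^\infty(U)}\lesssim \mu_1^+\|\tilde w\|_\infty$ is a regularity statement that depends on the geometry of $U\Subset U^+$, not merely on $|U^+|$; for an arbitrary open neighbourhood of the (possibly very irregular) projection $\pi_{V_0}(E)$ there is no such uniform control. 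One can rescue the model case by taking $U^+$ to be a ball, where $\tilde w$ is explicit, but then the hypothesis becomes ``$\pi_{V_0}(E)$ fits in a $k$-ball of volume $\approx \haus^k(E)$'', which is strictly stronger than $\haus^k(\pi_{V_0}(E))\le\haus^k(E)$. A second, smaller, issue: the drift term produces a contribution $hA\,|\pi_{V_0^\perp}(x)|$ in $h|\nabla w|$ that is \emph{linear} in $|\pi_{V_0^\perp}|$, while your positive term $\tfrac12 A\mu_1^+|\pi_{V_0^\perp}|^2$ is quadratic; your cushion $(\mu_1^+-c)\tilde w$ must absorb this linear piece uniformly on the tube, which constrains $\delta'$ and $A$ further than you indicate.

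In short: your diagnosis of the global obstruction matches the paper's, but even the ``easy'' single-plane case is less clean than you present it, and in any event the paper offers no argument for you to compare to --- only the weaker Theorem~\ref{teo_maingoal_k2}, whose proof proceeds by an entirely different route (radial ODE barriers summed over a covering by balls, Lemmas~\ref{lem_pk discend} and~\ref{basicconstr_Pk}) that does not attempt to see the $k$-dimensional Faber--Krahn scaling at all.
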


It may be possible that condition $\mathrm{diam}(E) < R$ could be weakened to $E \subset B_R(o)$ for some $o \in \R^m$.

\subsection*{A geometric application} 

A source of motivation for the present paper comes from the theory of minimal submanifolds in $\R^n$. Indeed, the note arises from the desire to put the main result in \cite{beluma} into a more general perspective, explaining how it descends from an estimate for the principal eigenvalue of $\FF_k^-$. At the same time, we improve on \cite{beluma} on various aspects, in particular for submanifolds with nonzero mean curvature. \par
In \cite{beluma}, we addressed a question of S.T. Yau about the discreteness of the spectrum of the Laplacian  of some striking examples of bounded, complete minimal surfaces constructed after N. Nadirashvili's counterexample to an old conjecture of E. Calabi \cite{nadirashvili}. We recall that the spectrum $\sigma(-\Delta)$ of the Laplace-Beltrami operator on a manifold $M$ is said to be discrete if it only contains a divergent sequence of eigenvalues, each of them with finite multiplicity. For instance, this happens if $M$ is the interior of a compact submanifold with smooth boundary. In this case, clearly $M$ is not complete as a metric space. On the other hand, complete minimal surfaces which are well-behaved, in the sense that they have finite density at infinity: 
	\[
	\lim_{r \to \infty} \frac{|M\cap \mathbb{B}_r|}{r^2} < \infty, \qquad \mathbb{B}_r \subset \R^3 \ \text{a ball}, 
	\]
satisfy $\sigma(-\Delta) = \R^+_0$ by \cite[Thm. 1]{limamovi}. Therefore, complete manifolds with discrete spectrum are expected to exhibit a pathological behaviour, and the examples arisen after Nadirashvili's work are good candidates to have discrete spectrum. Nadirashvili constructed a complete minimal surface $M^2 \to \R^3$ which is bounded in $\R^3$, and his method, a far reaching extension of that of L. Jorge and F. Xavier in \cite{jx}, inspired an entire literature: in particular, highly nontrivial refinements  \cite{mm_1,mm_2,fmm,lmm_1,mn} and entirely new methods \cite{adfl,af,agl,al} enabled to construct complete, bounded minimal surfaces whose behaviour at infinity is controlled in some way. To be more precise, given an immersion $\varphi : M \ra \R^n$, we define the limit set 
$$
\lim \varphi = \Big\{ p \in \R^n \ : \ p = \lim_j \varphi(x_j) \ \ \text{for some divergent sequence $\{x_j\} \subset M$}\Big\}.
$$
Here, $\{x_j\}$ is said to be divergent if it eventually lies outside every fixed compact set of $M$. Note that, if $\varphi(M) \subset \Omega$ for some domain $\Omega$, then $\lim \varphi \subset \overline{\Omega}$. If $\lim \varphi \subset \partial \Omega$, we say that $M$ is proper in $\Omega$. After Nadirashvili's work,  proper examples in convex sets were constructed in \cite{mm_1,mm_2,fmm}, examples with a control on the conformal structure of $M$ in \cite{af,agl,al}, and examples with a control of the Hausdorff dimension of $\lim \varphi$, in the sense that $\dim_\haus(\lim \varphi) = 1$, in \cite{mn,adfl}. They motivated our criterion in \cite[Thm. 2.4]{beluma}, which we refine in the present note.\par
To state the result, we introduce some terminology. For a $(2,0)$-tensor $A$ with eigenvalues $\{\lambda_j(A)\}_{j=1}^n$ in increasing order, and given $k \in \{1,\ldots, n\}$, we write 
	\[
	\PP_k^-(A) \doteq \lambda_1(A) + \ldots + \lambda_k(A). 
	\]
\begin{remark}\label{rem_minmax}
\emph{Note that $\PP_k^-(A)$ can be characterized as follows:
	\[
	\PP_k^-(A) = \inf \Big\{ \tr_W A \colon \ W \, \text{a $k$-dimensional subspace} \Big\},
	\]
where, taken an orthonormal basis $\{e_i\}$ for $W$, $\tr_W A \doteq \sum_{i=1}^k A(e_i,e_i)$. 
}
\end{remark}	
	
Given a $k$-dimensional immersed submanifold $\varphi : M \to \R^n$, we denote with ${\bf H}$ the unnormalized mean curvature vector, that is, the trace of the second fundamental form of $M$. Let $\Omega \subset \R^n$ be an open subset, and let $\Lambda_{k-1},\Lambda_k \in \R$. We say that $\partial \Omega$ satisfies 
	\[
	\inf_{\partial \Omega} \PP_k^-(\II_{\partial \Omega}) \ge \Lambda_k, \qquad \inf_{\partial \Omega} \PP_{k-1}^-(\II_{\partial \Omega}) \ge \Lambda_{k-1}
	\]
in the barrier sense if, for each $x \in \partial \Omega$ and $\eps>0$, there exists a supporting smooth  hypersurface $S$ such that $S \cap \Omega = \emptyset$, $x \in S$ and the second fundamental form $\II_S$ of $S$ in the direction pointing towards $\Omega$ satisfies both of the inequalities
	\[
	\PP_k^-(\II_{S})(x) \ge \Lambda_k -\eps, \qquad \PP_{k-1}^-(\II_S)(x) \ge \Lambda_{k-1} -\eps.
	\]
For instance, by using hyperplanes as supporting hypersurfaces, a convex set $\Omega$ satisfies
	\[
	\inf_{\partial \Omega} \PP_k^-(\II_{\partial \Omega}) \ge 0 \qquad \inf_{\partial \Omega} \PP_{k-1}^-(\II_{\partial \Omega}) \ge 0,
	\]
and a domain that can be written as the intersection of balls of radius $R$ satisfies	
	\[
	\inf_{\partial \Omega} \PP_k^-(\II_{\partial \Omega}) \ge \frac{k}{R} \qquad \inf_{\partial \Omega} \PP_{k-1}^-(\II_{\partial \Omega}) \ge \frac{k-1}{R}.
	\]
Given $\overline{\Lambda}_k \in \R \cup \{-\infty\}$, we also say that 
	\[
	\inf_{\partial \Omega} \PP_k^-(\II_{\partial \Omega}) > \overline{\Lambda}_k
	\]
if there exists $\Lambda_k > \overline{\Lambda}_k$ such that $\inf_{\partial \Omega} \PP_k^-(\II_{\partial \Omega}) \ge \Lambda_k$. \par
	
We are ready to state	
		
\begin{theorem}\label{teo_main_geom}
Let $\varphi : M \rightarrow \R^n$ be a bounded immersed submanifold of dimension $k \ge 2$, contained in a relatively compact domain $\Omega$ with diameter $R$. Assume that the mean curvature vector $\mathbf{H}$ of $M$ satisfies $R\|\mathbf{H}\|_\infty < k$. Define
	\[
\Psi(t) = \left\{ \begin{array}{ll} t^2| \log(R/t)| & \quad \text{if } \, k = 2 \\[0.2cm]
t^2 & \quad \text{if } \, k \ge 3.
\end{array}\right. 
	\]
Assume that either 
	\begin{itemize}
	\item[(i)] $\haus_\Psi(\lim \varphi) = 0$, or 
	\item[(ii)] $\haus_\Psi(\lim \varphi \cap \Omega) = 0$ and the second fundamental form $\II_{\partial \Omega}$ of $\partial \Omega$ in the inward direction satisfies 
	\begin{equation}\label{eq_barrierbordo}
	\inf_{\partial \Omega} \PP_k^-(\II_{\partial \Omega}) > \|\mathbf{H}\|_{\infty}, \qquad \inf_{\partial \Omega} \PP_{k-1}^-(\II_{\partial \Omega}) > -\infty
	\end{equation}
in the barrier sense. 
	\end{itemize}
Then, the spectrum of the Laplace-Beltrami operator on $M$ is discrete. 
\end{theorem}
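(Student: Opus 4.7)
The plan is to apply Persson's characterization of the essential spectrum: $\sigma(-\Delta_M)$ is discrete if and only if the first Dirichlet eigenvalue $\lambda_1(M\setminus K) \to +\infty$ as $K$ exhausts $M$ through compact subsets. The bridge to Theorem~\ref{teo_maingoal_k2} is the pointwise comparison
\[
\Delta_M(v\circ \varphi)(x) = \tr_{T_xM}\bigl(\nabla^2 v(\varphi(x))\bigr) + \langle \nabla v, \mathbf{H}(x)\rangle \leq \PP_k^+(\nabla^2 v) + \|\mathbf{H}\|_\infty|\nabla v| = \FF_k^+[v],
\]
valid for any $C^2$ test $v$ near $\varphi(x)$ with $h = \|\mathbf{H}\|_\infty$, by the $\PP_k^+$ analogue of Remark~\ref{rem_minmax} (which gives $\tr_W A \leq \PP_k^+(A)$ for every $k$-plane $W$) together with Cauchy--Schwarz. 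Consequently, any positive $v$ satisfying $\FF_k^+[v] + cv \leq 0$ on an open $\Omega'' \subset \R^n$ pulls back to $u = v \circ \varphi > 0$, a positive viscosity supersolution of $\Delta_M u + c u = 0$ on $\varphi^{-1}(\Omega'')$; a Barta-type inequality then forces $\lambda_1(\varphi^{-1}(\Omega'')) \geq c$.

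For case (i), fix $c>0$. The hypotheses $\mathrm{diam}(\lim\varphi) \leq R$, $R\|\mathbf{H}\|_\infty < k$, and $\haus_\Psi(\lim\varphi) = 0$ yield $\bar\mu(\FF_k^-,\lim\varphi) = +\infty$ via Theorem~\ref{teo_maingoal_k2}, so there is an open $\Omega'' \supset \lim\varphi$ with $\bar\mu(\FF_k^-,\Omega'') > c$; by Remark~\ref{rem_posinega}, this is realized by a positive $v \in \mathrm{LSC}(\overline{\Omega''})$ satisfying $\FF_k^+[v] + cv \leq 0$ on $\Omega''$. Since $\lim\varphi \subset \Omega''$ is open and $\varphi(M) \subset \overline\Omega$ is bounded, the closed set $K := \varphi^{-1}(\overline\Omega \setminus \Omega'')$ is compact in $M$ (any divergent sequence inside it would produce a limit point in $\lim\varphi \setminus \Omega''$, contradicting openness), and $\varphi(M \setminus K) \subset \Omega''$ by construction. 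The viscosity comparison then yields $\lambda_1(M\setminus K) \geq \lambda_1(\varphi^{-1}(\Omega'')) \geq c$; letting $c\to\infty$ concludes.

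Case (ii) introduces a local barrier near $\partial\Omega \cap \lim\varphi$. For the interior, applying Theorem~\ref{teo_maingoal_k2} to the compact slice $\lim\varphi \cap \{\mathrm{dist}(\cdot,\partial\Omega) \geq \eta\}$ produces, for every $\eta > 0$, an open $\Omega_0^\eta \Subset \Omega$ covering it on which lives a positive supersolution $v_0$ of $\FF_k^+[v_0] + c v_0 \leq 0$. For each $p \in \partial\Omega \cap \lim\varphi$, hypothesis \eqref{eq_barrierbordo} furnishes a supporting hypersurface $S_p$ at $p$ with $\PP_k^-(\II_{S_p})(p) > \|\mathbf{H}\|_\infty$ and $\PP_{k-1}^-(\II_{S_p})(p) > -\infty$; using the signed distance $\rho_p$ to $S_p$ (positive on the $\Omega$-side) and a profile $f$ blowing up at $S_p$ (for instance $f(s)=s^{-\alpha}$ or $-\log s$ with suitable small parameters), we set $v_p := f(\rho_p)$ on a tubular neighborhood $U_p$ of $p$ in $\Omega$. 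Verifying $\FF_k^+[v_p] + c v_p \leq 0$ splits according to whether the $k$-plane optimizing $\PP_k^+(\nabla^2 v_p)$ contains $\nabla \rho_p$ (in which case $f''$ appears together with a $\PP_{k-1}^-(\II_{S_p})$ tangential contribution) or lies within the tangent distribution of the level sets of $\rho_p$ (in which case only $\PP_k^-(\II_{S_p})$, which must strictly dominate $\|\mathbf{H}\|_\infty$, matters); both curvature bounds are therefore needed. Extracting a finite subcover of the compact set $\partial\Omega \cap \lim\varphi$ by such $U_p$'s together with $\Omega_0^\eta$ for sufficiently small $\eta$, and letting $v := \min\{v_0, v_{p_1},\ldots,v_{p_m}\}$ --- still a positive viscosity supersolution of $\FF_k^+[v] + c v \leq 0$ by convexity of $\PP_k^+$ in the Hessian --- one obtains a positive supersolution on an open neighborhood of $\lim\varphi$, and the argument of case (i) concludes.

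The main technical obstacle is the construction and verification of the boundary barrier: one must choose the profile $f$ so that $\FF_k^+[f(\rho_p)] + c f(\rho_p) \leq 0$ holds \emph{throughout} $U_p$ (not merely at $p$), and for arbitrarily large $c$. The strict inequality $\PP_k^-(\II_{S_p}) > \|\mathbf{H}\|_\infty$ plays, in this local setting, the role that $hR<k$ plays in Theorem~\ref{teo_maingoal_k2}, and is precisely what enables the construction to close.
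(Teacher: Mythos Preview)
Your treatment of case~(i) is essentially the paper's argument, phrased dually via positive $\FF_k^+$-supersolutions instead of negative $\FF_k^-$-subsolutions. The only point you gloss over is that the $v$ furnished by Remark~\ref{rem_posinega} is merely $\mathrm{LSC}$, so the pull-back inequality $\Delta_M(v\circ\varphi)\leq\FF_k^+[v]$ in the viscosity sense is not immediate from the pointwise chain rule you wrote; the paper invokes the Restriction Theorem of Harvey--Lawson \cite{HL_restriction} for this (alternatively, one can bypass the issue by using the $C^2$ barriers of Theorem~\ref{teo_lemmafund} directly).

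Case~(ii), however, has a genuine gap: your boundary barrier does not satisfy the inequality you claim. Take the model situation where $S_p$ is strictly convex towards $\Omega$, so all principal curvatures $\kappa_i>0$. With $f(s)=s^{-\alpha}$ or $f(s)=-\log s$ one has $f'<0$ and $f''>0$, hence the eigenvalues of $\nabla^2(f\circ\rho_p)$ are $f''(\rho)>0$ in the normal direction and $-f'(\rho)\kappa_i=|f'(\rho)|\kappa_i>0$ tangentially. Every eigenvalue is positive, so $\PP_k^+(\nabla^2 v_p)>0$, and together with $h|\nabla v_p|>0$ and $cv_p>0$ you get $\FF_k^+[v_p]+cv_p>0$, not $\leq 0$. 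No choice of $\alpha$ fixes this: any profile blowing up at $0$ is eventually convex. The $\min$-gluing is also unjustified, since each $v_{p_j}$ is defined only on $U_{p_j}$ and you give no mechanism for the transition.

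The paper's route in~(ii) is structurally different and does \emph{not} try to build an eigenvalue-$c$ barrier near $\partial\Omega$. It first pulls from \cite{galimame} a single \emph{global} Lipschitz function $w$ on $\overline\Omega$, with $w=0$ on $\partial\Omega$, $w<0$ in $\Omega$, and $\FF_k^-[w]\geq\delta$ for a fixed $\delta>0$ depending only on the data in \eqref{eq_barrierbordo}; its pull-back $v=w\circ\varphi$ then obeys $\Delta_M v\geq\delta$ on all of $M$. Separately, for the interior slice $E_\eps=\lim\varphi\cap\{\mathrm{dist}(\cdot,\partial\Omega)\geq\sqrt\eps\}$ the strengthened Theorem~\ref{teo_lemmafund} gives $w_\eps\in C^2(\Omega)$ with $\FF_k^-[w_\eps]\geq\mathbb{1}_{U_\eps}$ and, crucially, the quantitative bound $\|w_\eps\|_\infty\leq C\eps$. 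The eigenvalue is then manufactured by the weighted combination
\[
u_\eps \;=\; (w_\eps\circ\varphi)\;-\;C\eps\;+\;\sqrt{\eps}\,v,
\]
which is negative on $\varphi^{-1}(U_\eps\cup V_\eps)$ with $V_\eps=\{\mathrm{dist}(\cdot,\partial\Omega)<\sqrt\eps\}$; using that $|v|\lesssim\sqrt\eps$ on $\varphi^{-1}(V_\eps)$ (Lipschitz bound for $w$) one checks $\Delta_M u_\eps + C'\eps^{-1/2}u_\eps\geq 0$ there. The $\sqrt\eps$ scaling is what balances the two pieces and produces the divergent constant; your scheme has no analogue of this balancing, which is why it cannot close.
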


\begin{remark}\label{rem_tech}
\emph{Clearly, if $\partial \Omega$ is $C^2$, \eqref{eq_barrierbordo} is equivalent to $\PP_k^-(\II_{\partial \Omega}) > \|\mathbf{H}\|_{\infty}$ on $\partial \Omega$, that was the condition stated in \cite{beluma}. Besides the weaker regularity assumed on $\partial \Omega$, Theorem \ref{teo_main_geom} improves on \cite[Thm. 2.4]{beluma} when $\mathbf{H} \not \equiv 0$ for each $k$. First, condition $R\|{\bf H}\|_\infty < k$ is weaker than $R \|{\bf H}\|_\infty < k-1$, which was required in \cite{beluma}. Second, when 
	\[
	\theta \doteq k-1-R \|{\bf H}\|_\infty \in (0,1) 
	\]
(which is automatic if $k=2$ and $\mathbf{H} \not\equiv 0$), condition $\haus_\Psi(\lim \varphi \cap \Omega) = 0$ was replaced by the stronger 
	\[
	\haus^{\theta+1}(\lim \varphi \cap \Omega) = 0,
	\]
with the somehow puzzling feature that $R\|{\bf H}\|_\infty$ appeared to bound the exponent of the Haudorff dimension. The possibility to get better dimensional conditions for $\lim \varphi \cap \Omega$ depends on Lemmas \ref{lem_pk discend} and \ref{basicconstr_Pk} for the operator $\FF_k^-$, which may have an independent interest. 
}
\end{remark}

The above result is particularly effective when $k=2$, since for instance it can be applied to any of the examples in \cite{mm_1,mm_2,fmm,mn,adfl} to answer Yau's question, as done in \cite{beluma}. Also, Theorem \ref{teo_main_geom} applies to solutions of Plateau's problem for (parametrized) surfaces with constant mean curvature (see \cite{struwe} for a detailed account), and our condition on $h$ is almost sharp: indeed, interestingly, for a rectifiable Jordan curve $\gamma \subset B_R$ inequality $hR \le 2$ turns out to be sharp to guarantee the existence of a topological disk with constant mean curvature $h$ and boundary $\gamma$, in the sense that if $hR>2$ then there exists $\gamma \subset B_R$ such that Plateau's problem has no solution with mean curvature $h$ (cf. \cite{heinz}). The next result was shown in \cite[Cor. 2.6]{beluma} for minimal surfaces. 

\begin{corollary}
Let $\gamma : \mathbb{S}^1 \to \R^n$ be a Jordan curve with $\mathrm{diam}(\gamma(\mathbb{S}^1)) \le R$ and 
\[
\haus_{\Psi}\big( \gamma(\mathbb{S}^1)\big) = 0, \qquad \Psi(t) = t^2|\log (R/t)|. 
\]
Fix $h \in \R^+_0$ satisfying $hR < 2$. Then, every solution of Plateau's problem for surfaces with constant (unnormalized) mean curvature $h$ and boundary $\gamma$ has discrete spectrum. 
\end{corollary}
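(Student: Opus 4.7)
The plan is to apply Theorem~\ref{teo_main_geom}, case (i), with $k=2$. Let $\varphi: M \to \R^n$ be a solution of Plateau's problem with constant mean curvature $h$ and boundary $\gamma$; then $M$ is a bounded, immersed $2$-submanifold with $\|\mathbf{H}\|_\infty = h$. I would first realise $\varphi(M)$ inside a relatively compact domain $\Omega$ with $\mathrm{diam}(\Omega)\cdot h < 2$. Since $hR < 2$ gives $2/h > R \ge \mathrm{diam}(\gamma(\mathbb{S}^1))$, many closed balls of radius $2/h$ contain $\gamma(\mathbb{S}^1)$; each such ball is a barrier for $\varphi$ by the Hopf tangency principle for CMC hypersurfaces, since its bounding sphere has unnormalized mean curvature $h$ in the inward direction and hence cannot be touched from inside by $\varphi(M)$. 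Intersecting over all such barriers yields the CMC-convex hull $K \supset \varphi(M)$ of $\gamma(\mathbb{S}^1)$, and a parallelogram-law estimate applied to the intersection of any two barrier balls, combined with the strict inequality $hR < 2$, allows one to keep $\mathrm{diam}(K)\cdot h$ below $2$. I would then choose $\Omega$ to be a small open neighborhood of $K$ with $R' := \mathrm{diam}(\Omega)$ still satisfying $R'h < 2$.

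Next, I would verify the Hausdorff hypothesis on the limit set. Because $\varphi$ is parametrized on a closed disk whose boundary maps onto $\gamma(\mathbb{S}^1)$, every divergent sequence in the interior parameter domain accumulates on the boundary, so $\lim\varphi \subset \gamma(\mathbb{S}^1)$. The functions $\Psi'(t) = t^2 |\log(R'/t)|$ (associated to $\Omega$ in Theorem~\ref{teo_main_geom}) and $\Psi(t) = t^2 |\log(R/t)|$ (as in the corollary's hypothesis) are comparable as $t \to 0^+$, since their ratio tends to $1$; hence $\haus_{\Psi'}$ and $\haus_\Psi$ share their null sets, and in particular $\haus_{\Psi'}(\lim\varphi) \le \haus_{\Psi'}(\gamma(\mathbb{S}^1)) = 0$.

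All hypotheses of Theorem~\ref{teo_main_geom}(i) with $k=2$ being thus verified, that theorem delivers the discreteness of the spectrum of the Laplace--Beltrami operator on $M$. The main delicate point is the first step: obtaining a control of $\mathrm{diam}(K)$ fine enough to keep $\mathrm{diam}(K)\cdot h$ strictly below $2$ under the sharp hypothesis $hR<2$. This is where the Heinz-type interpretation of the bound becomes essential, ensuring simultaneously the existence of the barrier balls and the narrowness of their intersection.
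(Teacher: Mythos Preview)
Your overall strategy coincides with what the paper intends: the corollary is stated as a direct application of Theorem~\ref{teo_main_geom}(i) with $k=2$, and the paper gives no separate proof. Your treatment of the limit set (namely $\lim\varphi\subset\gamma(\mathbb{S}^1)$ via continuity of the parametrization on the closed disk) and of the comparability of $\Psi(t)=t^2|\log(R/t)|$ with $\Psi'(t)=t^2|\log(R'/t)|$ is correct.

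The genuine gap is in your first step, the enclosure $\varphi(M)\subset K$. The Hopf (or, in codimension $>1$, the Jorge--Tomi) tangency principle is a \emph{local} statement: it forbids an interior one-sided tangency of $M$ with a sphere $\partial B_\rho$ having $2/\rho>h$, but it does not prevent $M$ from crossing such a sphere transversally. Concretely, take $n=3$, let $S$ be the sphere of radius $2/h$, let $\gamma$ be a small circle on $S$ with $\mathrm{diam}(\gamma)=2r<2/h$, and let $M$ be the \emph{large} spherical cap of $S$ bounded by $\gamma$. This is a CMC-$h$ disk with boundary $\gamma$, it satisfies $hR<2$ with $R=2r$, yet $\mathrm{diam}(M)=4/h$; hence $M$ is not contained in your ball hull $K$, and no $\Omega\supset\varphi(M)$ can have $\mathrm{diam}(\Omega)\cdot h<2$. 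So the issue is not the parallelogram bound on $\mathrm{diam}(K)$ but the inclusion $\varphi(M)\subset K$ itself, which fails.

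What the paper implicitly relies on (via the reference to Struwe) is that in the variational CMC Plateau problem the enclosure in a ball is built into the construction of solutions, not deduced a posteriori; under that reading one simply takes $\Omega$ to be a slight enlargement of the given ball and the hypothesis $hR<2$ does the rest. If one instead reads ``every solution'' as ``every CMC-$h$ disk spanning $\gamma$'', an additional argument is needed (restricting to small solutions, or disposing of large solutions by their boundary regularity), and your proposal does not supply it.
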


The geometric counterpart of Conjecture \ref{conj_mainana} is the following

\begin{conjecture}\label{conj_maingeom}
Let $\varphi : M \rightarrow \R^n$ be a bounded immersed submanifold of dimension $k \ge 2$, contained in a relatively compact domain $\Omega$ with diameter $R$. Assume that the mean curvature vector $\mathbf{H}$ of $M$ satisfies $R\|\mathbf{H}\|_\infty < k$, and that either
	\begin{itemize}
	\item[(i)] $\haus^k(\lim \varphi) = 0$, or 
	\item[(ii)] $\haus^k(\lim \varphi \cap \Omega) = 0$ and the second fundamental form $\II_{\partial \Omega}$ of $\partial \Omega$ in the inward direction satisfies 
	\[
	\inf_{\partial \Omega} \PP_k^-(\II_{\partial \Omega}) > \|\mathbf{H}\|_{\infty}, \qquad \inf_{\partial \Omega} \PP_{k-1}^-(\II_{\partial \Omega}) > -\infty
	\]
in the barrier sense. 
	\end{itemize}
Then, the spectrum of the Laplace-Beltrami operator on $M$ is discrete. 
\end{conjecture}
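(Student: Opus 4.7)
The plan is to mirror the strategy that yields Theorem \ref{teo_main_geom}, feeding in the sharp analytic estimate of Conjecture \ref{conj_mainana} in place of Theorem \ref{teo_maingoal_k2}. The overall architecture is the standard decomposition principle: the spectrum of $-\Delta_M$ is discrete if and only if, for every $\lambda > 0$, there is a compact set $K \subset M$ such that the bottom Dirichlet eigenvalue of $M \setminus K$ exceeds $\lambda$. The bridge between the ambient and the intrinsic pictures is the classical identity
\[
\Delta_M (w \circ \varphi)(p) = \tr_{T_pM}\bigl(\nabla^2 w(\varphi(p))\bigr) + \langle \nabla w, \mathbf{H}\rangle_{\varphi(p)},
\]
which, via the min--max description in Remark \ref{rem_minmax} together with $|\langle \nabla w, \mathbf{H}\rangle| \le \|\mathbf{H}\|_\infty |\nabla w|$, gives $\Delta_M(w\circ \varphi) \ge \FF_k^-[w]$ in the viscosity sense, with $h = \|\mathbf{H}\|_\infty$.

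For case (i), fix $\lambda > 0$. Since $hR < k$, Conjecture \ref{conj_mainana} applied to $E = \lim \varphi$ (after a harmless enlargement of $R$ so that both $\mathrm{diam}(E) < R$ and $hR < k$ persist) yields $\mu(\FF_k^-, E) = +\infty$, so one finds an open neighborhood $U_\lambda \supset \lim\varphi$ and $w \in \USC(U_\lambda)$ with $w < 0$ and $\FF_k^-[w] + \lambda w \ge 0$ on $U_\lambda$ in the viscosity sense. The set $K \doteq \varphi^{-1}(\R^n \setminus U_\lambda)$ is compact in $M$, because any divergent sequence in $M$ accumulates in $\lim\varphi \subset U_\lambda$ and thus eventually leaves $K$. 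Hence $\varphi(M\setminus K) \subset U_\lambda$, and $u \doteq -w\circ \varphi > 0$ on $M\setminus K$ satisfies $-\Delta_M u \ge \lambda u$ in viscosity sense. Barta's inequality then forces the first Dirichlet eigenvalue of $M\setminus K$ to be $\ge \lambda$, and letting $\lambda \to \infty$ concludes the argument.

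Case (ii) differs only in how one treats points of $\lim \varphi$ lying on $\partial \Omega$, where no Hausdorff measure control is assumed. The condition \eqref{eq_barrierbordo} is exactly designed to furnish local barriers: at each $x \in \partial\Omega$, the supporting hypersurface $S$ with $\PP_k^-(\II_S) > \|\mathbf{H}\|_\infty$ allows one to build, in a one-sided neighborhood $V_x \subset \Omega$, a smooth negative $v$ with $\FF_k^-[v] + \lambda v \ge 0$, by taking a suitable smooth concave profile of the signed distance to $S$; the lower bound on $\PP_{k-1}^-(\II_S)$ controls the remaining eigenvalues and the gradient term. A finite cover of $\partial \Omega$ by such $V_x$, combined with an interior supersolution on a neighborhood of $\lim \varphi \cap \Omega$ produced via Conjecture \ref{conj_mainana} exactly as in case (i), can be glued (taking a minimum, which preserves the viscosity inequality for $\FF_k^-$) into a single negative supersolution on an open set $U_\lambda \supset \lim\varphi$, and the argument concludes as in case (i).

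The decisive obstacle is of course Conjecture \ref{conj_mainana} itself: its proof demands an ABP / covering argument for $\FF_k^-$ sharply sensitive to the $k$-dimensional nature of the operator (constant functions along $(n-k)$-dimensional affine subspaces annihilate $\PP_k^-$), rather than the $\haus_\Psi$-type bound of Theorem \ref{teo_maingoal_k2}, which is blind to this feature when $k \ge 3$. Once the analytic statement is in hand, the geometric part above is a transcription of \cite{beluma} and of the proof of Theorem \ref{teo_main_geom}; the only nontrivial technical points are the viscosity-sense verification of $\Delta_M(w\circ\varphi) \ge \FF_k^-[w]$ at points where $\varphi$ is not an embedding, and the interior-boundary gluing in (ii), both of which are routine with the tools already in place.
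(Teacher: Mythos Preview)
The statement is labeled a \emph{conjecture} in the paper and is not proved there; immediately after stating it, the authors warn that their method would derive case~(i) from Conjecture~\ref{conj_mainana}, while ``Case~(ii) seems to be subtler.'' Your proposal correctly identifies case~(i) as a direct transcription of the proof of Theorem~\ref{teo_main_geom} with Conjecture~\ref{conj_mainana} replacing Theorem~\ref{teo_maingoal_k2}, and this part matches what the paper itself asserts would work.

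Your treatment of case~(ii), however, has a genuine gap, and it is precisely the one the paper flags. There is first a technical slip: the minimum of two viscosity subsolutions of $\FF_k^-[w]+cw \ge 0$ is not in general a subsolution (the maximum is), so your gluing step is mis-stated. But the real issue is structural. In the paper's proof of case~(ii) of Theorem~\ref{teo_main_geom}, the interior contribution is not merely a negative subsolution on \emph{some} neighbourhood of $E_\eps = \lim\varphi \cap(\Omega\setminus V_\eps)$: Theorem~\ref{teo_lemmafund} produces $w_\eps \in C^2(\overline\Omega)$ defined on the \emph{whole} of $\Omega$, satisfying $\FF_k^-[w_\eps] \ge \mathbb{1}_{U_\eps}$ on $\Omega$ (in particular $\FF_k^-[w_\eps]\ge 0$ away from $U_\eps$) and $\|w_\eps\|_\infty \le C_1 Q$ with $Q$ arbitrarily close to $\haus_\Psi(E_\eps)=0$. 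These two features are exactly what make the \emph{additive} combination $u_\eps = v_\eps - C\eps + \sqrt{\eps}\, v$ work on $\varphi^{-1}(U_\eps \cup V_\eps)$: globality on $\Omega$ permits the sum via superadditivity of $\FF_k^-$, and the smallness of $\|w_\eps\|_\infty$ balances against the boundary barrier on $V_\eps$ without destroying the interior estimate on $U_\eps$. Conjecture~\ref{conj_mainana}, as stated, only asserts $\mu(\FF_k^-,E) \ge C\haus^k(E)^{-2/k}$, i.e.\ the existence of \emph{some} negative subsolution on \emph{some} open neighbourhood of $E$; it provides neither a function defined on all of $\Omega$ with $\FF_k^-[\cdot]\ge 0$ everywhere, nor an $L^\infty$ bound in terms of $\haus^k(E)$. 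Without those, neither the paper's additive scheme nor a max-gluing of locally defined barriers can be carried out, so your claim that ``the interior--boundary gluing in~(ii) \ldots\ [is] routine with the tools already in place'' is unwarranted---this is the very step the authors single out as open even conditionally on Conjecture~\ref{conj_mainana}.
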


A word of warning: our proof of Theorem \ref{teo_main_geom} based on Theorem \ref{teo_maingoal_k2} could  easily be adapted to prove the geometric Conjecture \ref{conj_maingeom} from Conjecture \ref{conj_mainana} \emph{only in case} $(i)$. Case $(ii)$ seems to be subtler.

\section{Proof of Theorem \ref{teo_maingoal_k2}}

We start with the following ODE Lemma.

\begin{lemma}\label{lem_pk discend}
Let $k \in \{1,\ldots,n\}$, $R \in \R^+$ and $h,h^* \in \R^+_0$ satisfying 
\[
hR < k, \qquad h^* \ge \max \left\{ h, \frac{h}{k-hR}\right\}.
\]
Let $\xi \in C(\R^+)$ be non-increasing, non-negative and such that 
\begin{equation}\label{inte_w}
\int_{0^+} t^{k-1}\xi(t) \di t < \infty,
\end{equation}
and let $\psi \in C^2\big((0,R)\big)$ solve
\begin{equation}\label{eq_h}
\left\{\begin{array}{l}
\disp \big(t^{k-1}e^{-h^*t}\psi'\big)' = e^{-h^*t} t^{k-1} \xi \qquad \text{on } \, (0, R), \\[0.2cm]
\disp \lim_{t \ra 0}\big(t^{k-1}\psi'(t)\big) = 0.
\end{array}\right.  
\end{equation}
Fix $x_0 \in \R^n$ and set $r(x)=|x-x_0|$. Then, the function $w(x) = \psi(r(x))$ satisfies  
\begin{equation}\label{Pk_h}
\disp \FF_k^-[w] \doteq \PP_k^-(\nabla^2 w) - h|\nabla w| \ge \frac{\xi(r)}{1+h^*R}  \qquad \text{on } \, B_{R}(x_0) \backslash \{x_0\}.
\end{equation}
Moreover, $w(x) \in C^2(B_R(x_0))$ and the inequality holds pointwise on the entire $B_R(x_0)$ provided that $\xi \in C(\R^+_0)$.
\end{lemma}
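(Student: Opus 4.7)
The plan is to compute explicitly from the integrated form of the ODE and then separate cases according to the spectrum of $\nabla^2 w$. First, I would integrate \eqref{eq_h} using $\lim_{t\to 0} t^{k-1}\psi'(t) = 0$ to obtain
\[
\psi'(t) = \frac{e^{h^*t}}{t^{k-1}}\int_0^t s^{k-1}e^{-h^*s}\xi(s)\di s \ge 0,
\]
so that $\psi$ is non-decreasing and $|\nabla w| = \psi'(r)$. The Hessian $\nabla^2 w$ has the standard spectrum of a radial function: one radial eigenvalue $\psi''(r)$ and the tangential eigenvalue $\psi'(r)/r$ with multiplicity $n-1$. Expanding the left-hand side of \eqref{eq_h} yields the pointwise identity
\[
\psi''(r) + (k-1)\frac{\psi'(r)}{r} - h^*\psi'(r) = \xi(r), \qquad r \in (0,R).
\]

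Next, I would split into two cases according to which eigenvalues contribute to $\PP_k^-(\nabla^2 w)$. In the easier case $\psi''(r) \le \psi'(r)/r$, the radial direction sits among the $k$ smallest eigenvalues, so $\PP_k^-(\nabla^2 w) = \psi''(r) + (k-1)\psi'(r)/r$ and the displayed identity gives directly
\[
\FF_k^-[w] = \xi(r) + (h^* - h)\psi'(r) \ge \xi(r),
\]
using $h^* \ge h$ and $\psi' \ge 0$; this case alone already returns a bound stronger than claimed.

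The main obstacle is the opposite case $\psi''(r) > \psi'(r)/r$, where (since $k \le n-1$) the $k$ smallest eigenvalues are all tangential and $\PP_k^-(\nabla^2 w) = k\psi'(r)/r$. Now the ODE only yields an upper estimate on this quantity, so a lower bound on $\psi'(r)/r$ must instead be harvested from the integral representation: by monotonicity of $\xi$ and $e^{-h^*s} \ge e^{-h^*t}$ for $s \le t$,
\[
t^{k-1}e^{-h^*t}\psi'(t) \ge \xi(t) e^{-h^*t}\int_0^t s^{k-1}\di s = \xi(t)e^{-h^*t}\frac{t^k}{k},
\]
whence $\psi'(r)/r \ge \xi(r)/k$. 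Combined with $r<R$ and $hR<k$, this produces
\[
\FF_k^-[w] = \Big(\frac{k}{r} - h\Big)\psi'(r) \ge \frac{k-hR}{k}\xi(r).
\]
A direct algebraic check shows that the hypothesis $h^* \ge h/(k-hR)$ is equivalent to $(k-hR)/k \ge 1/(1+h^*R)$; this is precisely the relation that marries the two cases and accounts for the factor $1/(1+h^*R)$ in \eqref{Pk_h}.

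Finally, when $\xi \in C(\R^+_0)$ I would expand the integral near $0$ to get $\psi'(t)\to 0$ and $\psi'(t)/t \to \xi(0)/k$ as $t \to 0^+$, and from the ODE $\psi''(0^+) = \xi(0)/k$. Thus radial and tangential eigenvalues of $\nabla^2 w(x_0)$ coincide, $w \in C^2(B_R(x_0))$, and \eqref{Pk_h} at $x_0$ follows as the limiting case of the easier regime.
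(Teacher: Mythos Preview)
Your argument is correct and follows the same overall architecture as the paper: integrate \eqref{eq_h} to get $\psi'\ge 0$, compute the radial Hessian spectrum, split into the cases $\psi''(r)\le \psi'(r)/r$ and $\psi''(r)>\psi'(r)/r$, and in the hard case exploit the lower bound $\psi'(t)\ge t\xi(t)/k$ coming from the monotonicity of $\xi$. The one genuine difference is how you close Case~2. The paper first proves the intermediate inequality $\psi''(t)\le (1+h^*R)\psi'(t)/t$ and then feeds this back through the ODE, arriving at the target after several lines of algebra culminating in the equivalence $h^*\ge h/(k-hR)\Longleftrightarrow kh^*R/(1+h^*R)\ge hR$. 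You bypass that detour entirely: from $\psi'(r)/r\ge \xi(r)/k$ you get $\FF_k^-[w]=(k/r-h)\psi'(r)\ge \frac{k-hR}{k}\xi(r)$ in one step, and then observe that $h^*\ge h/(k-hR)$ is equivalent to $(k-hR)/k\ge 1/(1+h^*R)$. This is shorter and makes the role of the hypothesis on $h^*$ more transparent. One small remark: your parenthetical ``since $k\le n-1$'' is not part of the hypotheses (the lemma allows $k=n$), but this is harmless, since for $k=n$ the quantity $\PP_n^-(\nabla^2 w)$ is the full trace and the Case~1 identity applies regardless of the ordering of $\psi''$ and $\psi'/r$.
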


\begin{proof}
From 
$$
\nabla^2 w = \psi'' \di r \otimes \di r + \psi' \nabla^2 r = \left( \psi'' - \frac{\psi'}{r}\right) \di r \otimes \di r + \frac{\psi'}{r} \metric,
$$
the eigenvalues of $\nabla^2 w$ are $\psi''(r)$ with multiplicity 1, and $\psi'(r)/r$ with multiplicity $(n-1)$. Note that, expanding \eqref{eq_h}, 
\begin{equation}\label{prop_h_2}
\psi''(t) + \frac{k-1}{t}\psi'(t) = \xi(t) + h^* \psi'(t) \qquad \text{on } \, (0,R).
\end{equation}
Integrating \eqref{eq_h} on $(\eps,t)$, we get
\[
\psi'(t) = \frac{e^{h^*t}}{t^{k-1}} \left\{ e^{-h^*\eps}\eps^{k-1}\psi'(\eps) + \int_\eps^t e^{-h^*s}s^{k-1} \xi(s) \di s \right\}.
\]
Since the last term in brackets has a finite limit as $\eps \ra 0$ by \eqref{inte_w}, and because of the limit condition in \eqref{eq_h}, 
\begin{equation}\label{prop_h}
\psi'(t) = \disp \frac{e^{h^* t}}{t^{k-1}}\int_0^t e^{-h^*s}s^{k-1} \xi(s) \di s \ge 0 \qquad \text{on } \, (0,R).
\end{equation}
We claim that 
\begin{equation}\label{eq_claim}
\psi''(t) \le (1 + h^*R) \frac{\psi'(t)}{t} \qquad \text{on } \, (0,R).
\end{equation}
Indeed, since $\xi(s)e^{-h^*s}$ is non-increasing, 
\[
\psi'(t) \ge \frac{e^{h^*t}}{t^{k-1}} \xi(t)e^{-h^*t} \int_0^t s^{k-1}\di s = \frac{t}{k}\xi(t)
\]
and therefore, using \eqref{prop_h_2},
\[
\psi'' - \frac{h^*R + 1}{t} \psi' = \left[ h^* - \frac{k + h^* R}{t} \right]\psi' + \xi(t) \le -\frac{k}{t}\psi'(t) + \xi(t) \le 0
\]
on $(0,R)$, as claimed. Let $x \in B_R(x_0)\setminus \{x_0\}$ and $r = r(x)$. If $\psi''(r) \le \psi'(r)/r$, then using $h^* \ge h$ we deduce
\[
\begin{array}{lcl}
\PP_k^-(\nabla^2 w) - h|\nabla w| & = & \disp \psi''(r) + (k-1) \frac{\psi'(r)}{r} - h \psi'(r) \\[0.4cm]
& = & \disp \xi(r) + (h^*-h) \psi'(t) \ge \xi(r) \ge \frac{\xi(r)}{1+h^*R}.
\end{array}
 \]
On the other hand, if $\psi''(r) > \psi'(r)/r$, inequality \eqref{eq_claim} and $\psi' \ge 0$ give
\[
\begin{array}{lcl}
\PP_k^-(\nabla^2 w) - h|\nabla w| & = & \disp k \frac{\psi'(r)}{r} - h \psi'(r) \ge (k-1)\frac{\psi'(r)}{r} + \frac{\psi''(r)}{1+h^*R} - h \psi'(r) \\[0.4cm]
& = & \disp (k-1)\frac{\psi'(r)}{r} + \frac{1}{1+h^*R}\left[\xi(r) + h^* \psi'(r)-  \frac{k-1}{r}\psi'(r) \right] - h \psi'(r) \\[0.4cm]
& = & \disp \left[\frac{k-1}{r} \frac{h^*R}{1+h^*R} + \frac{h^*}{1+h^*R} - h\right] \psi'(r) + \frac{1}{1+h^*R}\xi(r) \\[0.4cm]
& \ge & \disp \frac{1}{R}\left[\frac{k h^*R}{1+h^*R} - hR\right] \psi'(r) + \frac{\xi(r)}{1+h^*R} \\[0.4cm]
& \ge & \disp \frac{\xi(r)}{1+h^*R}
\end{array}
\] 
pointwise on $B_R(x_0) \backslash \{x_0\}$, where the last inequality follows since our assumption $h^* \ge h/(k-hR)$ is equivalent to $\frac{k h^*R}{1+h^*R} - hR \ge 0$.\\
The $C^2$-regularity of $w$ and the validity of the pointwise inequality for $\FF_k^-[w]$ up to $x_0$ easily follow from the very definition of $\psi$.
\end{proof}

We next state our key Lemma, which refines \cite[Lem. 4.1]{beluma}.

\begin{lemma}\label{basicconstr_Pk}
Fix $x_0 \in \R^n$ and let $r(x)=|x-x_0|$. Fix $R>0$ and $k \in \{1,\ldots, n\}$, let $h, h^* \in \R^+_0$ satisfying
	\[
	hR < k, \qquad h^* \ge \max \left\{ h, \frac{h}{k-hR} \right\}.
	\]
Choose a non-negative, non-increasing function $S\in C(\R^+_0)$ satisfying
\begin{equation}\label{defS}
S =1 \quad \text{on } \, [0,1], \qquad \left\{ \begin{array}{ll}
\disp \int_0^{\infty} S(t) \di t = \hat{S} < \infty & \quad \text{if } k=1, \\[0.4cm]
\disp \int_0^{\infty} t S(t)\max\big\{1, |\log t|\big\} \di t = \hat{S} < \infty & \quad \text{if } k=2, \\[0.4cm]
\disp \int_0^{\infty} t S(t) \di t = \hat{S} < \infty & \quad \text{if } k>2. 
\end{array}\right.
\end{equation}
Then, there exists a positive constant $C_0=C_0(k, h^*R)$ such that the following holds: for each $a  \in (0, R/e]$, there is a $C^2$ function 
	\[
	u_{x_0} \colon B_R(x_0) \subset \R^n \ra \R 
	\]
such that
\begin{eqnarray}
(i) & & u_{x_0} \ge 0, \quad  u_{x_0}(x) =0 \ \text{ if and only if }
x =x_0; \label{propux1}\\[0.2cm]
(ii) & & \|u_{x_0}\|_{\infty} \le \left\{\begin{array}{ll} 
C_0 \hat S Ra & \quad \text{if } \, k=1, \\[0.2cm]
C_0 \hat S a^2 \log\left( \frac{R}{a}\right) & \quad \text{if } k = 2, \\[0.2cm]
C_0 \hat S a^2 & \quad \text{if } k>2;
\end{array}\right.
\label{propux2} \\[0.3cm]
(iii) & & \FF_k^-[u_{x_0}] \ge \frac{k S(r/a)}{1+h^*R} \quad \text{on } \, B_R(x_0)
\label{propux3}
\end{eqnarray}
where $\FF_k^-$ is as in \eqref{def_Fkpm}. In particular, $\FF_k^-[u_{x_0}] \ge \frac{k}{1+h^*R}$ on $B_a(x_0)$.
\end{lemma}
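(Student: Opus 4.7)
The natural strategy is to apply Lemma \ref{lem_pk discend} directly with the choice $\xi(t) = kS(t/a)$, and then set $u_{x_0}(x) := \psi(r(x)) - \psi(0^+)$, where $\psi \in C^2((0,R))$ solves the ODE \eqref{eq_h} with this $\xi$. Since $S$ is continuous, non-negative, non-increasing and $S \equiv 1$ on $[0,1]$, the datum $\xi$ lies in $C(\R^+_0)$ with $\xi(0) = k$ and clearly satisfies the integrability \eqref{inte_w}. Lemma \ref{lem_pk discend} then produces $u_{x_0} \in C^2(B_R(x_0))$ satisfying the pointwise inequality $\FF_k^-[u_{x_0}] \ge kS(r/a)/(1+h^*R)$ everywhere on $B_R(x_0)$, which is exactly (iii); the stronger lower bound on $B_a(x_0)$ is automatic since $S(r/a)\equiv 1$ when $r<a$.

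Property (i) follows from the explicit formula \eqref{prop_h}: for every $t > 0$ the integrand is strictly positive on $(0,\min\{t,a\})$ because $S\equiv 1$ on $[0,1]$, so $\psi$ is strictly increasing and $u_{x_0}(x)=0$ iff $x=x_0$. The limit $\psi(0^+)$ is finite because $\psi'(t) = O(t)$ as $t \to 0$, which is visible from the integrand after extracting the continuous factor $\xi(0)=k$.

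The real substance is the estimate (ii). Using $e^{-h^*s}\le 1$ and $e^{h^*t}\le e^{h^*R}$ in \eqref{prop_h}, then integrating in $t$ and swapping the order via Fubini, gives
$$
\|u_{x_0}\|_\infty \le \int_0^R \psi'(t)\,dt \le k\, e^{h^*R}\int_0^R s^{k-1}S(s/a)\int_s^R t^{1-k}\,dt\,ds.
$$
For $k=1$ the inner integral is at most $R$, and after the substitution $u=s/a$ one obtains $\|u_{x_0}\|_\infty \le e^{h^*R} R a \hat S$. For $k\ge 3$ the inner integral is bounded by $s^{2-k}/(k-2)$, and the double integral collapses to $(k/(k-2))e^{h^*R}a^2\int_0^\infty u S(u)\,du \le (k/(k-2))e^{h^*R}a^2\hat S$.

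The only delicate case, and the main obstacle, is $k=2$: the inner integral equals $\log(R/s)$, so after substitution the bound becomes $2e^{h^*R}a^2\int_0^{R/a} uS(u)[\log(R/a) - \log u]\,du$. The $\log(R/a)$ piece is controlled by $\log(R/a)\hat S$; the $\log u$ piece is precisely where the reinforced integrability $\int_0^\infty uS(u)\max\{1,|\log u|\}\,du=\hat S$ in \eqref{defS} is needed, contributing an extra $\hat S$. The hypothesis $a\le R/e$ then ensures $\log(R/a)\ge 1$, so the two contributions combine to at most $2\hat S\log(R/a)$, yielding the claimed estimate $\|u_{x_0}\|_\infty \le C_0 \hat S a^2\log(R/a)$. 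In each case the constant $C_0$ depends only on $k$ and $h^*R$ through the elementary factors just displayed, completing (ii).
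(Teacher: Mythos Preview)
Your proof is correct and follows essentially the same route as the paper: define $\xi(t)=kS(t/a)$, invoke Lemma~\ref{lem_pk discend} for (iii), and obtain (ii) by Fubini on the explicit integral for $\psi$, treating the three regimes $k=1$, $k=2$, $k\ge 3$ separately with the logarithmic splitting in the middle case handled exactly as you describe. The only cosmetic difference is that the paper fixes $\psi(0)=0$ via the explicit double integral \eqref{defing} rather than subtracting $\psi(0^+)$ afterwards, and bounds the exponential factor as $e^{h^*(s-\sigma)}\le e^{h^*R}$ in one step instead of your two separate bounds $e^{-h^*s}\le 1$, $e^{h^*t}\le e^{h^*R}$; neither changes the argument.
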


\begin{proof}
Define $\xi(t) = kS(t/a)$, and set
\begin{equation}\label{defing}
\psi(t) = \int_0^t \frac{e^{h^*s}}{s^{k-1}} \left[\int_0^s e^{-h^*\sigma}\sigma^{k-1} \xi(\sigma)
\di \sigma\right] \di s.
\end{equation}
Since $\psi$ solves \eqref{eq_h}, $\xi$ satisfies \eqref{inte_w} and $S$ is non-increasing and $1$ in a neighbourhood of zero, by Lemma \ref{lem_pk discend} the function $u_{x_0} = \psi(r)$ solves
$$
\FF_k^-[u_{x_0}] \ge \frac{\xi(r)}{1+h^*R} \qquad \text{on } \, B_R(x_0).
$$
To prove the $L^\infty$ bound, we change the order of integration and change variables to get, for $k=1$,
$$
\begin{array}{lcl}
\psi(t) & = & \disp \int_0^t e^{-h^*\sigma} \xi(\sigma)\left\{ \int_\sigma^t e^{h^*s}\di s \right\} \di  \sigma  \\[0.5cm]
& \le & \disp \int_0^t e^{h^*(t-\sigma)}(t-\sigma)\xi(\sigma) \di  \sigma \le \disp e^{h^*R} R \int_0^t \xi(\sigma) \di \sigma \\[0.5cm]
 & = & \disp e^{h^*R} Rk a \int_0^{t/a} S(\tau) \di \tau \le e^{h^*R} Rk \hat{S} a.
\end{array}
$$
For $k>2$,
$$
\begin{array}{lcl}
\psi(t) & = & \disp \int_0^t e^{-h^*\sigma}\sigma^{k-1} \xi(\sigma) \left\{ \int_\sigma^t \frac{e^{h^*s}\di s}{s^{k-1}} \right\}\di  \sigma  \\[0.5cm]
& \le & \disp \int_0^t e^{h^*(t-\sigma)}\sigma^{k-1} \xi(\sigma) \left\{ \int_\sigma^t \frac{\di s}{s^{k-1}} \right\}\di  \sigma  \\[0.5cm]
& \le & \disp e^{h^*R} \int_0^t \sigma^{k-1} \xi(\sigma)\frac{\sigma^{2-k}-t^{2-k}}{k-2} \di \sigma \\[0.5cm]
 & \le & \disp \frac{e^{h^*R}}{k-2} \int_0^t \sigma \xi(\sigma) \di  \sigma = \frac{a^2 k e^{h^*R}}{k-2} \int_0^{t/a} \tau S(\tau) \di \tau \le \frac{e^{h^*R}k \hat S}{k-2} a^2, 
\end{array}
$$
and for $k=2$,
\[
\begin{array}{lcl}
\psi(t) & = & \disp \int_0^t e^{-h^*\sigma} \sigma \xi(\sigma) \left\{ \int_\sigma^t \frac{e^{h^*s} \di s}{s} \right\}\di  \sigma  \le \int_0^t e^{h^*(t-\sigma)} \sigma \xi(\sigma)\log(t/\sigma) \di \sigma \\[0.5cm]
 & \le & \disp e^{h^*R}\int_0^R \sigma \xi(\sigma)\log(R/\sigma) \di \sigma \\[0.5cm]
& = & \disp e^{h^*R} \log R\int_0^R \sigma \xi(\sigma)\di \sigma - e^{h^*R}\int_0^R \sigma \xi(\sigma)\log \sigma \di \sigma \\[0.5cm]
& = & \disp k a^2 e^{h^*R} \log R \int_0^{R/a} \tau S(\tau) \di \tau - k a^2 e^{h^*R}\int_0^{R/a} \tau S(\tau)\log (a \tau) \di \tau \\[0.5cm]
& = & \disp k a^2 e^{h^*R} \log \left( \frac{R}{a}\right) \int_0^{R/a} \tau S(\tau) \di \tau - ka^2 e^{h^*R} \int_0^{R/a} \tau S(\tau)\log \tau \di \tau \\[0.5cm]
& \le & \disp k a^2 \log \left( \frac{R}{a}\right) e^{h^*R} \int_0^{R/a} \tau S(\tau) \di \tau + ka^2 e^{h^*R} \int_0^{R/a} \tau S(\tau)|\log \tau| \di \tau \\[0.5cm]
& \le & \disp k a^2 e^{h^*R} \left[ \int_0^{\infty} \tau S(\tau)\max\big\{1, |\log \tau|\big\} \di \tau \right] \left\{ \log \left( \frac{R}{a}\right) +1 \right\} \\[0.5cm]
& \le & \disp 2k e^{h^*R} \hat S a^2 \log \left( \frac{R}{a}\right), 
\end{array}
\]
where in the last line we used $a \le R/e$, so $\log(R/a) \ge 1$. This concludes the proof.
\end{proof}

\begin{remark}
\emph{In \cite[Lem. 4.1]{beluma}, the radial function $u_{x_0}$ is constructed for $k \ge 2$, from a solution $\psi$ of 
	\[
	\psi''(t) + \frac{\theta}{t} \psi'(t) = \xi(t) \qquad \text{on } (0,R), 
	\]
where $\theta \doteq k-1 - hR$ is assumed to be positive (forcing the stronger requirement $hR < k-1$). In particular, the case $\theta \in (0,1)$ yields to an estimate on $u_{x_0}$ of the form $\|u_{x_0}\|_\infty \lesssim a^{\theta+1}$. As it will be apparent in the next theorem, the bound implies a more binding control on $E$ in terms of the Hausdorff measure $\haus^{\theta+1}$, leading to the stronger condition on $\lim \varphi \cap \Omega$ described in Remark \ref{rem_tech}.	
}
\end{remark}

We are now ready to prove Theorem \ref{teo_maingoal_k2}, in the following strengthened form that will be used later to prove Theorem \ref{teo_main_geom}.

\begin{theorem}\label{teo_lemmafund}
Let $\Omega \subset \R^n$, $n \ge 3$ be an open subset with diameter $R$, fix $k \in \{1,\ldots, n-1\}$ and let $E \Subset \Omega$ be a compact subset satisfying $\haus_\Psi(E) < \infty$, where
	\[
	\Psi(t) = \left\{ \begin{array}{ll}
	Rt & \quad \text{if } \, k=1 \\[0.2cm]
	t^2 |\log(R/t)| & \quad \text{if } \, k=2 \\[0.2cm]
	t^2 & \quad \text{if } \, k \ge 3.
	\end{array}\right.
	\]
Fix $h \in \R^+_0$ satisfying $hR < k$. Then, there exists a constant $C_1= C_1(n,k,hR)$ with the following properties: for each $\eps$ and each $Q \in (\haus_\Psi(E), \infty)$, there exists a relatively compact, open set $U_\eps$ containing $E$ and there exists $w_\eps \in C^2(\overline\Omega)$ such that $w_\eps < 0$ on $\overline{\Omega}$ and 
	\[
	\FF_k^-[w_\eps] \ge \mathbb{1}_{U_\eps} \qquad \text{on } \, \Omega, \qquad \|w_\eps\|_\infty \le C_1 Q,
	\]
where $\FF_k^-$ is as in \eqref{def_Fkpm}. In particular, 
	\[
	\FF_k^-[w_\eps] + \frac{1}{C_1 Q} w_\eps \ge 0 \ \ \ \text{on } \, U_\eps, \qquad \FF_k^-[w_\eps] \ge 0 \ \ \text{on } \, \Omega,
	\]
and 
	\[
	\bar{\mu}(\FF_k^-,E) \ge \frac{1}{C_1\haus_\Psi(E)}.
	\]
\end{theorem}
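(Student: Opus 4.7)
The plan is to construct $w_\eps$ by superposing the radial barriers from Lemma \ref{basicconstr_Pk}, exploiting the superadditivity of $\FF_k^-$: by the min characterization in Remark \ref{rem_minmax}, $\PP_k^-$ is a concave, positively $1$-homogeneous function of the Hessian, hence superadditive, and the triangle inequality gives $-h|\nabla(v_1+v_2)| \ge -h|\nabla v_1| - h|\nabla v_2|$, so that
\begin{equation*}
\FF_k^-[v_1+v_2] \ge \FF_k^-[v_1]+\FF_k^-[v_2]
\end{equation*}
pointwise for any $C^2$ functions $v_1,v_2$.

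Fix $\eps>0$ and $Q \in (\haus_\Psi(E), \infty)$, pick $R'>R$ slightly larger with $hR'<k$ still, and set $h^* \doteq \max\{h,\, h/(k-hR')\}$. Fix a continuous, non-increasing $S \in C(\R^+_0)$ satisfying \eqref{defS}. Using the compactness $E \Subset \Omega$ and the definition of $\haus_\Psi$, for any sufficiently small $\delta>0$ I select a finite cover $\{B_{r_j}(x_j)\}_{j=1}^N$ of $E$ with $r_j \le \delta \le R'/e$, $\overline{B_{r_j}(x_j)} \subset \Omega$, and $\sum_{j=1}^N \Psi(r_j) < Q$, and set $U_\eps \doteq \bigcup_j B_{r_j}(x_j) \Subset \Omega$. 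For each $j$, I apply Lemma \ref{basicconstr_Pk} with radius $R'$, centre $x_j$ and $a=r_j$ to produce $u_j \in C^2(B_{R'}(x_j))$ with $u_j \ge 0$, $\FF_k^-[u_j] \ge \tfrac{k}{1+h^*R'}$ on $B_{r_j}(x_j)$, $\FF_k^-[u_j] \ge 0$ on $B_{R'}(x_j)$, and, in all three cases $k=1,\,2,\,\ge 3$, $\|u_j\|_\infty \le C_0 \hat S\, \Psi(r_j)$ (up to harmless constants absorbing the passage from $R$ to $R'$, since $hR'$ is close to $hR$).

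Since $\overline\Omega \subset B_{R'}(x_j)$ for every $j$ (because $\mathrm{diam}\,\Omega \le R < R'$ and $x_j \in \Omega$), $u \doteq \sum_{j=1}^N u_j$ is $C^2$ on $\overline\Omega$, and iterating the superadditivity gives
\begin{equation*}
\FF_k^-[u] \ge \sum_{j=1}^N \FF_k^-[u_j] \ge \tfrac{k}{1+h^*R'}\,\mathbb{1}_{U_\eps} \qquad \text{on } \Omega,
\end{equation*}
with $\|u\|_\infty \le C_0 \hat S \sum_j \Psi(r_j) < C_0 \hat S\, Q$. Setting $w_\eps \doteq \tfrac{1+h^*R'}{k}(u - \|u\|_\infty - \eps)$, positive $1$-homogeneity of $\FF_k^-$ and its invariance under additive constants yield $\FF_k^-[w_\eps] \ge \mathbb{1}_{U_\eps}$ on $\Omega$, $w_\eps < 0$ on $\overline\Omega$, and $\|w_\eps\|_\infty \le C_1 Q$ for a suitable $C_1 = C_1(n,k,hR)$ once $\eps$ is small. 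For the eigenvalue statement: on $U_\eps$, $\FF_k^-[w_\eps]+c w_\eps \ge 1 - c\|w_\eps\|_\infty \ge 0$ whenever $c \le 1/(C_1 Q)$, while on $\Omega \setminus U_\eps$ the inequality $\FF_k^-[w_\eps]+c w_\eps \ge 0$ is immediate from $\FF_k^-[w_\eps]\ge 0$, $w_\eps < 0$, $c \ge 0$; hence $\bar\mu(\FF_k^-,E) \ge \bar\mu(\FF_k^-,U_\eps) \ge 1/(C_1 Q)$, and letting $Q \downarrow \haus_\Psi(E)$ yields the claim.

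The main obstacle, in my view, is the careful bookkeeping needed to align the three cases in the $\|u_{x_0}\|_\infty$-bound of Lemma \ref{basicconstr_Pk} with the single function $\Psi$ appearing in the theorem — most notably the logarithmic factor for $k=2$, which forces the choice $\delta \le R'/e$ so that $\log(R'/r_j)\ge 1$ and the estimate \eqref{propux2} remains under control. Beyond this, everything reduces to the superadditivity observation together with a single homothety and translation of the summed barrier.
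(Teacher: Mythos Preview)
Your argument is correct and follows essentially the same route as the paper: cover $E$ by finitely many small balls, apply Lemma~\ref{basicconstr_Pk} at each center, sum the radial barriers using the superadditivity and positive $1$-homogeneity of $\FF_k^-$, then shift and rescale. Your use of a slightly enlarged radius $R'>R$ (to make $\overline\Omega \subset B_{R'}(x_j)$ open) and of the shift $-\|u\|_\infty-\eps$ in place of the paper's $-2\|u_i\|_\infty$ are cosmetic variations.

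One slip: your sentence ``on $\Omega \setminus U_\eps$ the inequality $\FF_k^-[w_\eps]+c\,w_\eps \ge 0$ is immediate from $\FF_k^-[w_\eps]\ge 0$, $w_\eps < 0$, $c \ge 0$'' is false as written --- with $w_\eps<0$ and $c\ge 0$ one has $c\,w_\eps \le 0$, so the sum need not be nonnegative. Fortunately this claim is unnecessary: the definition of $\bar\mu(\FF_k^-,U_\eps)$ only requires the inequality on $U_\eps$, and the theorem itself only asserts $\FF_k^-[w_\eps]\ge 0$ on $\Omega$ (which you do have) together with $\FF_k^-[w_\eps]+\tfrac{1}{C_1Q}w_\eps\ge 0$ on $U_\eps$. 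Simply delete the offending clause.
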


\begin{proof}
Note first that $R > \mathrm{diam}(E)$. Choose $S(t) \in C^\infty_c([0,2))$, $S \equiv 1$ on $[0,1]$, and let  $\hat C_1$ denote the constant $C_0 \hat S$ in Lemma \ref{basicconstr_Pk}. Cover $E$ with a finite number of balls $\{B_i\}_{i=1}^t$, $B_i = B_{a_i}(x_i)$, $t=t(Q) \ge 2$ such that  
	\[
	x_i \in \Omega, \qquad 0 < a_i \le R/e, \qquad \sum_{j=1}^t \Psi(a_j) \le Q.
	\]
Define $h^* \doteq \max\{ h, h/(k - hR)\}$. To each $i$, let $u_i \doteq u_{x_i}$ given by Lemma \ref{basicconstr_Pk}, which is defined and $C^2$ on $\overline{B_R(x_i)} \supset \overline{\Omega}$. Define
	\[
	w_\eps = \frac{1 + h^*R}{k} \sum_i (u_i - 2\|u_i\|_\infty) \in C^2(\overline\Omega). 
	\]
Then, $w_\eps < 0$ on $\overline\Omega$ and there, by Lemma \ref{basicconstr_Pk} and the $1$-homogeneity and superadditivity of $\FF_k^-$, it satisfies  
	\[
	\FF_k^-[w_\eps] \ge 1 \qquad \text{on } \, \bigcup_i B_i \doteq U_\eps, \qquad \FF_k^-[w_\eps] \ge 0 \qquad \text{on } \, \Omega, 
	\]
and $\|w_\eps\|_\infty \le 2\frac{1+h^*R}{k} \hat C_1 Q \doteq C_1 Q$. Therefore, 
	\[
	\FF_k^-[w_\eps] + \frac{1}{C_1 Q} w_\eps \ge 0 \qquad \text{on } \, U_\eps, 
	\]	
showing that 
	\[
	\bar{\mu}(\FF_k^-,E) \ge \bar{\mu}(\FF_k^-,U_\eps) \ge \frac{1}{C_1 Q}.
	\]
The thesis follows by letting $Q \to \haus_\Psi(E)$. 	
\end{proof}

\begin{remark}
\emph{The constant $C_0$ in Lemma \ref{basicconstr_Pk}, hence $C_1$ in Theorem \ref{teo_lemmafund}, can be bounded from above in terms of $k,n$ and an upper bound for $h^*R$. Being
	\[
	h^*R \ge \max\left\{ hR, \frac{hR}{k-hR} \right\},
	\]
$C_0$ and $C_1$ can equivalently be bounded from above in terms of a lower bound for $k - hR$, as stated in Remark \ref{rem_constantC}. 
}
\end{remark}


\section{From Theorem \ref{teo_maingoal_k2} to Theorem \ref{teo_main_geom}} \label{sec_3to5}

We premit a few observations. Given a Riemannian manifold $M$, its Laplace operator $\Delta_M$ is initially defined on $C^\infty_c(M)$, and then extended in a canonical way (Friedrichs extension) to a self-adjoint operator on a domain $\mathcal{D} \subset L^2(M)$. The spectrum $\sigma(-\Delta_M)$ is a closed subset of $\R^+_0$. Agreeing with the literature, we split $\sigma(-\Delta_M)$ into the discrete spectrum $\sigma_{\disc}(-\Delta_M)$ (eigenvalues with finite multiplicity, which are isolated in $\sigma(-\Delta)$) and the essential spectrum $\sigma_\ess(-\Delta_M)=\sigma(-\Delta_M)\backslash \sigma_{\disc}(-\Delta_M)$. For $\Omega \subset M$ open, let $\lambda(\Omega)$ be the bottom of the spectrum of the Friedrichs extension of $(-\Delta_M, C^\infty_c(\Omega))$, which coincides with the first eigenvalue if $\partial \Omega$ is Lipschitz. By Persson's formula \cite{persson},
$$
\inf \sigma_\ess(\Delta_M) = \sup_{K \subset M \text{ compact}} \lambda(M \backslash K).
$$
It is known by \cite{beniva} that $\lambda(M \backslash K)$ coincides with the principal eigenvalue of $M \backslash K$, defined as 
	\[
	\sup \Big\{ c \in \R \ : \ \exists v \in \USC(M \backslash K), \ v < 0  \ \ \text{on $M \backslash K$, } \ \Delta_M v + c v \ge 0 \ \text{on } \, M \backslash K \Big\}.
	\]
Therefore, to prove that $\sigma(-\Delta_M)$ is discrete, equivalently, that $\inf \sigma_\ess(-\Delta_M) = +\infty$, it is enough to produce, for each $\eps>0$, a compact set $K_\eps \subset M$ and functions $v_\eps < 0$ on $M \backslash K_\eps$ such that 
	\[
	\Delta_M v_\eps + C_\eps v_\eps \ge 0, \qquad \text{with } \, C_\eps \to +\infty \ \ \ \text{as } \, \eps \to 0.
	\]
To this aim, we first assume $(i)$, that is, that $\haus_\Psi(\lim \varphi) = 0$, and we define $h \doteq \|\mathbf{H}\|_\infty$. From Theorem \ref{teo_maingoal_k2}, we can take a sequence $\{U_\eps\}_{\eps>0}$ of relatively compact, open sets with $U_\eps \subset \Omega$ and  
$$
\lim \varphi \subset U_\eps, \qquad 2c_\eps = \bar \mu(\FF_k^-,U_\eps) \to +\infty \ \ \text{as } \, \eps \to 0.
$$
For each $\eps$, let $w_\eps \in \USC(\overline{U}_\eps)$, $w_\eps <0$ on $\overline{U}_\eps$ solve
$$
\FF^-_k[w_\eps] + c_\eps w_\eps \ge 0 \qquad \text{in viscosity sense on $U_\eps$.}
$$
Consider the functions $v_\eps = w_\eps \circ \varphi$. To explain the strategy, assume first that $w_\eps$ is $C^2$. Let $\{e_i\}$ be an orthonormal frame on $M$ in a neighbouhood of a point. Then, from the chain rule for the Hessian, the Laplacian $\Delta_M v_\eps$ of $v_\eps$ satisfies
\begin{equation}\label{eq_chain}
\begin{array}{lcl}
\disp \Delta_M v_\eps & = & \disp \sum_{i = 1}^k \nabla^2 w_\eps (\varphi_* e_i, \varphi_*e_i) + \langle \nabla w_\eps, H \rangle \\[0.4cm]
& \ge & \disp \sum_{i = 1}^k \nabla^2 w_\eps (\varphi_* e_i, \varphi_*e_i) - |H||\nabla w_\eps|.
\end{array}
\end{equation}
The term 
$$
\sum_{i = 1}^k \nabla^2 w_\eps (\varphi_* e_i, \varphi_*e_i)
$$
is the trace of $\nabla^2w_\eps$ restricted to the tangent plane $\varphi_* TM$ and thus, by the characterization in Remark \ref{rem_minmax}, it is at least $\PP^-_k(\nabla^2 w_\eps)$. Using the inequality satisfied by $w_\eps$ and $|H| \le h$ we therefore get 
	\[
	\Delta_M v_\eps \ge \PP^-_k(\nabla^2 w_\eps) - |H||\nabla w_\eps| \ge \FF_k^-[w_\eps] \ge - c_\eps w_\eps = -c_\eps v_\eps
	\]
on $\varphi^{-1}(U_\eps)$. Since $U_\eps$ contains $\lim \varphi$, $M \backslash \varphi^{-1}(U_\eps) = K_\eps$ is compact, so $\{v_\eps\}$ is the desired family of functions which guarantee the discreteness of $\sigma(-\Delta_M)$.
Next, we describe how to apply the above reasoning when $w_\eps$ has weak regularity. To this aim, we use Theorem 8.1 in \cite{HL_restriction}. We briefly explain their result in our setting, referring to \cite{HL_dir,HL_restriction} for notation and terminology. We consider the bundle of $2$-jets $J^2(M)$ and $J^2(\R^n)$, respectively over $M$ and $\R^n$. Jets $J \in J^2(\R^n)$ are denoted by $(y,r,p,A)$, where $y \in \R^n$, $r \in \R$, $p \in \R^n$ and $A \in \mathrm{Sym}^2(\R^n)$. We consider 
	\[
	\mathbf{F} = \Big\{ (r,p,A) \in \R \times \R^n \times \mathrm{Sym}^2(\R^n) : \ \PP_k^-(A) - h |p| + c_\eps r \ge 0\Big\}
	\]
and the subset
	\[
	F = \R^n \times  \mathbf{F} \subset J^2(\R^n)
	\]
which is, in Harvey and Lawson's terminology, a universal Riemannian subequation with model $\mathbf{F}$. In particular, $F$ is locally jet-equivalent modulo $M$ to $\mathbf{F}$. The differential inequality satisfied by $w_\eps$ is equivalent to say that $w_\eps$ is $F$-subharmonic on $U_\eps$ (namely, the $2$-jet of any $C^2$ function $\phi$ touching $w$ from above at a given point belongs to $F$). Consider the pull-back subset 
	\[
	\varphi^* F = \Big\{ \varphi^* J : J \in F \Big\} \subset J^2(M), 
	\] 
namely, if $J$ is the $2$-jet of the function $u$, then $\varphi^* J$ is the $2$-jet of the function $u \circ \varphi$. The computation in \eqref{eq_chain} guarantees that 
	\[
	\varphi^* F \subset G \doteq \Big\{ (x,s,q,B) \in J^2(M) \ : \ \tr(B) + c_\eps s \ge 0 \Big\}.
	\]
Note also that $G$ is a (universal, Riemannian) subequation on $M$. Then, the Restriction Theorem in \cite[Thm. 8.1]{HL_restriction} implies that $v_\eps$ is $\overline{\varphi^* F}$-subharmonic on $\varphi^{-1}(U_\eps)$, in particular, it is $G$-subharmonic. Equivalently, $v_\eps$ solves in the viscosity sense
	\[
	\Delta_M v_\eps + c_\eps v_\eps \ge 0 \qquad \text{on } \, \varphi^{-1}(U_\eps), 
	\]
as required. This concludes the proof in case $(i)$. To deal with case $(ii)$, we shall use the full strength of Theorem \ref{teo_lemmafund}, and also we shall produce a suitable barrier in a neighbourhood of $\partial \Omega$. First, because of \cite[Prop. 2]{galimame}, in the stated assumption \eqref{eq_barrierbordo} there exists a constant $\delta>0$ depending on 
	\[
	R, \ k, \ \inf_{\partial \Omega} \PP_k^-(\II_{\partial \Omega}) - \|\mathbf{H}\|_{\infty}, \ \inf_{\partial \Omega} \PP_{k-1}^-(\II_{\partial \Omega}),
	\]
and a function $w \in \lip(\overline{\Omega})$ such that $w=0$ on $\partial \Omega$, $w<0$ on $\Omega$ and 
	\[	
	\PP_k^-(\nabla^2 w) - h|\nabla w| \ge \delta \qquad \text{on } \, \Omega
	\]
in the barrier (hence, viscosity) sense. Hence, by the Restriction Theorem, $v \doteq w \circ \varphi$ satisfies in the viscosity sense
	\[
	\Delta_M v \ge \delta \qquad \text{on } \, M.
	\]
For $\eps>0$, define 
	\[
	V_\eps = \left\{ x \in \Omega : \mathrm{dist}(x, \partial \Omega) < \sqrt{\eps} \right\},
	\]
and let $E_\eps = \lim \varphi \cap (\Omega \backslash V_\eps)$. Note that $E_\eps$ is compact, and that $\haus_\Psi(E_\eps) = 0$, thus $\bar \mu( \FF_k^-,E_\eps) = +\infty$. By Theorem \ref{teo_lemmafund}, there exists a constant $C$ independent of $\eps$, an open set $U_\eps$ and a function $w_\eps \in C^2(\Omega)$, $w_\eps < 0$ on $\Omega$ satisfying 
	\begin{equation}\label{prop_wj_sharp}
	\FF_k^-[w_\eps] \ge \mathbb{1}_{U_\eps}, \qquad \|w_\eps\|_\infty \le C\eps.
	\end{equation}
Set $v_\eps = w_\eps \circ \varphi$, so that by restriction $\Delta_M v_\eps \ge \mathbb{1}_{\varphi^{-1}(U_\eps)}$, $\|v_\eps\|_\infty \le C\eps$. We study the function
	\[
	u_\eps \doteq v_\eps - C\eps + \sqrt{\eps} v \qquad \text{on } \, \varphi^{-1}(U_\eps \cup V_\eps).
	\]
Note that $K_\eps \doteq M \backslash \varphi^{-1}(U_\eps \cup V_\eps)$ is compact in $M$, and that 
	\[
	- 2 C\eps - \sqrt{\eps} |v| \le u_\eps \le - C\eps. 
	\]
On $\varphi^{-1}(U_\eps)$, we compute in the viscosity sense
	\[
	\begin{array}{lcl}
	\Delta_M u_\eps & = & \disp \Delta_M v_\eps + \sqrt{\eps} \Delta_M v \ge 1 + \sqrt{\eps} \delta \\[0,2cm]
	& \ge & \disp - \frac{u_\eps}{2 C\eps + \sqrt{\eps} \|v\|_\infty} + \sqrt{\eps}\delta \ge -\frac{C_1}{\sqrt{\eps}} u_\eps, 	
	\end{array}
	\]
for some constant $C_1= C_1(C, \|v\|_\infty,\delta)$. On the other hand, on $\varphi^{-1}(V_\eps)$, denoting with $L$ the Lipschitz constant of $w$ we deduce $|v|\le L\sqrt{\eps}$, hence
	\[
	\begin{array}{lcl}
	\Delta_M u_\eps & = & \disp \Delta_M v_\eps + \sqrt{\eps} \Delta_M v \ge \sqrt{\eps} \delta \\[0,2cm]
	& \ge & \disp -\frac{\sqrt{\eps} \delta}{2C\eps + \sqrt{\eps} v} u_\eps \ge -\frac{\delta}{(2C + L)\sqrt{\eps}} u_\eps \doteq -\frac{C_2}{\sqrt{\eps}} u_\eps.
	\end{array}
	\]
Summarizing, 
	\[
	\Delta_M u_\eps + \frac{\min\{C_1,C_2\}}{\sqrt{\eps}} u_\eps \ge 0 \qquad \text{on } \, \varphi^{-1}(U_\eps \cup V_\eps),
	\]
which implies $\inf \sigma_\ess(-\Delta) = +\infty$ by the arbitrariness of $\eps$.

\end{document}